\newcommand{\arxiv}[2][]{\ifthenelse{\equal{#1}{}}
{\href{http://arxiv.org/abs/#2}{\tt arXiv:#2}}
{\href{http://arxiv.org/abs/math/#2}{\tt arXiv:math.#1/#2}}}
\theoremstyle{plain}
\newtheorem{maintheorem}{Theorem}
\newtheorem*{theorem*}{Theorem}
\newtheorem{theorem}{Theorem}[section]
\newtheorem{lemma}[theorem]{Lemma}
\newtheorem*{lemma*}{Lemma}
\newtheorem{corollary}[theorem]{Corollary}
\theoremstyle{definition}
\newtheorem{example}[theorem]{Example}
\newtheoremstyle{remark}
{}{}{}{}{\itshape}{}{ }{\thmname{#1}\thmnumber{ \itshape #2.}}
\theoremstyle{remark}
\newtheorem{remark}[theorem]{Remark}
\def\x{\times}
\def\but{\setminus}
\def\eps{\varepsilon}
\def\emptyset{\varnothing}
\renewcommand{\:}{\colon}
\def\N{\mathbb{N}}
\def\K{\mathcal{K}}
\def\R{\mathbb{R}}
\def\Z{\mathbb{Z}}
\def\Q{\mathbb{Q}}
\def\conpol{\bigtriangledown}
\DeclareMathOperator{\lk}{lk}
\begin{document}

\title{Topological isotopy and Cochran's derived invariants}
\author{Sergey A. Melikhov}
\address{Steklov Mathematical Institute of Russian Academy of Sciences,
ul.\ Gubkina 8, Moscow, 119991 Russia}
\email{melikhov@mi-ras.ru}
%\date{\today}

\begin{abstract} We construct a link in the $3$-space that is not isotopic to any PL link (non-ambiently).
In fact, there exist uncountably many $I$-equivalence classes of links.

The paper also includes some observations on Cochran's invariants $\beta_i$.
\end{abstract}

\maketitle

\section{Introduction} \label{intro}

An $m$-component {\it link} is an injective continuous map $S^1\x\{1,\dots,m\}\to\R^3$.
A {\it knot} is a one-component link.
A link is called {\it tame} if it is equivalent (i.e.\ ambient isotopic) to a PL link; otherwise it is 
called {\it wild}.
Two $m$-component (PL) links are called (PL) {\it isotopic} if they are (PL) homotopic through links.
It is not hard to see that PL isotopy as an equivalence relation on PL links is generated by ambient isotopy
and introduction of local knots.

In 1974, D. Rolfsen asked: ``If $L_0$ and $L_1$ are PL links connected by a topological isotopy, are they 
PL isotopic?'' \cite{Ro}.
This question is also implicit in Milnor's 1957 paper \cite{Mi}*{cf.\ Remark 2 and Theorem 10}.
It was shown by the author that the affirmative answer is implied by the following conjecture: PL isotopy 
classes of links are separated by finite type invariants that are well-defined up to PL isotopy \cite{Me}.
Here finite type invariants of links may be understood either in the usual sense (of Vassiliev and Goussarov) 
or in the weaker sense of Kirk and Livingston \cite{KL}.

In the same paper \cite{Ro} Rolfsen also asked the following question:
``All PL knots are isotopic to one another. Is this true of wild knots?
A knot in the $3$-space which is not isotopic to a tame knot would have to be so wild as to fail to pierce 
a disk at each of its points.%
\footnote{In the terminology of Bing \cite{Bi}, a knot $K\:S^1\to\R^3$ {\it pierces} a $2$-disk $D\subset\R^3$ 
if $K(S^1)\cap D$ is a single point in the interior of $D$, and $K$ links $\partial D$.}
The `Bing sling' [\cite{Bi}] is such a candidate.''
This problem has been mostly approached with geometric methods.
D. Gillman showed that a certain knot pierces no disk but lies on a disk (and hence is isotopic to the unknot); 
however, no subarc of the Bing sling lies on a disk \cite{Gi}.
Nevertheless, M. Brin constructed a knot that at each point is locally equivalent to the Bing sling, 
but is isotopic to a PL knot \cite{Br}.

\begin{maintheorem} \label{th1}
There exists a $2$-component link that is not isotopic to any PL link.
\end{maintheorem}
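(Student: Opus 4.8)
The plan is to establish the stronger statement announced in the abstract --- that $2$-component links fall into uncountably many $I$-equivalence classes --- and to deduce Theorem~\ref{th1} from it. The deduction is immediate: there are only countably many PL links up to PL ambient isotopy (each is described by finitely many integers), hence only countably many $I$-equivalence classes contain a PL link; on the other hand, a topological isotopy $h_t$ from $L_0$ to $L_1$ is in particular an $I$-equivalence, because the track $(x,t)\mapsto(h_t(x),t)$ embeds $S^1\x\{1,2\}\x[0,1]$ into $\R^3\x[0,1]$ as two level-preserving annuli. So some $2$-component link occupies an $I$-equivalence class containing no PL link, and a fortiori is not isotopic to any PL link.

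To separate the $I$-equivalence classes I would use Cochran's invariants $\beta_i$, for which I need the following input about a $2$-component link $L$ with $\lk=0$: each $\beta_i(L)$ is an integer; the $\beta_i$ are additive under componentwise connected sum; when all Milnor invariants of $L$ of weight below that of $\beta_i$ vanish, $\beta_i(L)$ coincides with a specific Milnor invariant $\bar\mu_{I_i}(L)$ and is therefore, via the Stallings--Casson description of the nilpotent quotients of a link complement, an $I$-equivalence (indeed concordance) invariant; and, finally, for every $n$ there is a PL link $P_n$ with $\lk=0$, with $\beta_i(P_n)=\delta_{in}$, and with all Milnor invariants of weight below that of $\beta_n$ vanishing --- these being Cochran's realization links, assembled by iterated ``half-clasped Bing doubling.'' Given an infinite set $S\subseteq\N$, let $L_S$ be the infinite connected sum, performed in a shrinking sequence of disjoint balls (of diameters $2^{-n}$) along a fixed trivial tangle in a split link, of the pieces coming from $P_n$, $n\in S$. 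Each $L_S$ is a genuine $2$-component link, PL and locally flat away from a single limit point $p$.

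The equality $\beta_i(L_S)=[\,i\in S\,]$ should then follow from a routine additivity argument: for a fixed weight $w$, only the finitely many $P_n$ whose invariant $\beta_n$ has weight $\le w$ can affect the Milnor invariants of $L_S$ of weight $\le w$ --- the remaining pieces live in arbitrarily small balls and are Milnor-trivial through weight $w$ --- and on those pieces the relevant Milnor invariants are additive; taking $w$ to be the weight of $\beta_i$ gives $\beta_i(L_S)=\sum_{n\in S}\beta_i(P_n)=[\,i\in S\,]$. Hence distinct infinite sets $S,S'$ yield, for any $i\in S\mathbin{\triangle}S'$, the inequality $\beta_i(L_S)\ne\beta_i(L_{S'})$, so the uncountably many links $L_S$ represent uncountably many $I$-equivalence classes.

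I expect the genuine obstacle --- the technical core --- to be the justification that $\beta_i$, applied to the wild links $L_S$, is still an $I$-equivalence invariant. This means, first, defining $\beta_i(L_S)$ intrinsically, most naturally through the lower central series of $\pi_1(\R^3\but L_S)$ with its peripheral structure; since these link groups are not finitely presented one must check that Milnor's $\bar\mu$-machinery still operates in the relevant nilpotent quotients and that the longitude of a component that is wild at $p$ remains well defined there. And second, re-running the Stallings--Casson argument for an $I$-equivalence that need not be locally flat; here the point in my favor is that the argument is homological in the complements, so a single wild point does not affect the $H_1$ and $H_2$ computations that control the nilpotent quotients. A minor subsidiary matter is the Milnor-triviality and additivity bookkeeping used above, which I would handle through Magnus expansions of the longitudes.
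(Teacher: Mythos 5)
Your overall skeleton is the same as the paper's: prove that there are uncountably many $I$-equivalence classes, note that a topological isotopy is in particular an $I$-equivalence and that PL links fall into only countably many isotopy (hence $I$-equivalence) classes, and conclude. That deduction is fine. The problem is in the separation mechanism. You justify the $I$-equivalence invariance of $\beta_i$ on your wild links $L_S$ by identifying $\beta_i$ with a Milnor invariant $\bar\mu(\underbrace{1\dots1}_{2i}22)$ and invoking the Stallings--Casson nilpotent-quotient argument. But that identification holds only for the residue class $\bar\beta_i$ modulo $(\beta_1,\dots,\beta_{i-1})$, i.e.\ it requires the lower invariants to vanish --- and your own links violate this hypothesis at every index of $S$ past the first: as soon as some $\beta_j(L_S)=1$ with $j<i$, the invariant $\bar\mu(\underbrace{1\dots1}_{2i}22)$ is defined only modulo $1$ and carries no information, so the integer $\beta_i(L_S)$ is not controlled by the nilpotent quotients at all. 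More fundamentally, every link has only finitely many nonzero $\bar\mu$-invariants (Corollary \ref{mu-invariants}, via Higman's lemma), so the totality of $\bar\mu$-data takes only countably many values and can never separate uncountably many $I$-equivalence classes; the paper stresses exactly this point when it says that ``the difference between $\beta_i$ and $\bar\beta_i$ is essential'' for Theorems \ref{th1} and \ref{th2}. Thus the step you yourself flag as the technical core --- invariance of the integer-valued $\beta_i$ for wild links under (possibly non-locally-flat) $I$-equivalence --- cannot be obtained by re-running Milnor's $\bar\mu$-machinery in the lower central series; it needs a genuinely different argument. The paper supplies it geometrically (Theorem \ref{main}): $\beta_i$ is defined for wild links directly via $h$-Seifert surfaces, and invariance is proved by producing, via Alexander duality in $S^4$ (Steenrod homology/\v Cech cohomology), bordisms $W,W'$ in $S^3\x I$ between the $h$-Seifert surfaces on the two ends, whose transverse intersection is a framed bordism between the derived curves.

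A secondary gap of the same nature: your computation $\beta_i(L_S)=\sum_{n\in S}\beta_i(P_n)$ rests on ``Milnor-triviality through weight $w$'' of the tail and Magnus-expansion bookkeeping, which again only controls residues modulo the indeterminacy, not the integers $\beta_i$ of a wild link. To evaluate the honest integers on an infinite connected sum one needs the wild-link definition together with a locality/stability argument of the kind given in Theorem \ref{stability} (the invariant is unchanged under perturbations supported in small neighborhoods of the components), or else explicit $h$-Seifert surfaces as in the paper's realization links $W_{n_1,n_2,\dots}$ with $\beta_i=n_i$ (Theorem \ref{surjectivity}). If the invariance and computation were redone geometrically in this way, your infinite-connected-sum construction could plausibly replace the paper's explicit links; as written, however, both the invariance and the evaluation steps fail for the reason above.
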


The proof is not too difficult, and there is a natural temptation to try to build on it so as to cover the case 
of knots, e.g.\ by introducing an auxiliary second component.%
\footnote{Admittedly, this is one reason why Theorem \ref{th1} had to wait so long after its proof 
was presented by the author at the 2005 conference ``Manifolds and their Mappings'' in Siegen, Germany.}
But this is not an easy business.
If there exists a non-trivial isotopy invariant of knots, it must somehow take into account both global 
(by Brin's construction) and local information, at each point of the knot (by Rolfsen's remark).
Moreover, such an invariant cannot be an invariant of $I$-equivalence,%
\footnote{Two links $S^1\x\{1,\dots,m\}\to\R^3$ are called {\it $I$-equivalent} if they cobound an injective 
continuous map $S^1\x I\x\{1,\dots,m\}\to\R^3\x I$.
Obviously, isotopy implies $I$-equivalence.}
because of C. Giffen's shift-spinning construction \cite{Gif}:

\begin{theorem*}[Giffen] Every knot is $I$-equivalent to the unknot.
\end{theorem*}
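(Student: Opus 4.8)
The plan is to construct, for an arbitrary knot $K\:S^1\to\R^3$, an injective continuous map $F\:S^1\x I\to\R^3\x I$ that agrees with $K$ on $S^1\x\{0\}$ and with an unknot on $S^1\x\{1\}$; by the definition of $I$-equivalence recalled above, such an $F$ is precisely an $I$-equivalence between $K$ and the trivial knot (and since isotopy implies $I$-equivalence, this also pins down the relation between the two notions). The crucial point is that $F$ is required only to be injective and continuous — no local flatness or smoothness is imposed — so the annulus $F(S^1\x I)$ is allowed to be wildly embedded, and the entire difficulty will be pushed onto a single non-manifold point lying over the top level $S^1\x\{1\}$.

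First I would reposition $K$ into a form amenable to "spinning": present $K$ as a $(1,1)$-tangle $t$ inside a ball $B\subset\R^3$, with the two ends of $t$ at antipodal boundary points $N,S$, and fix a rotation $\rho_\varphi$ of $B$ about the axis through $N$ and $S$ (which keeps $N$ and $S$ fixed while stirring the interior). Then I would imitate, in an \emph{annular} setting and with an added longitudinal shift, the mechanism behind Zeeman's theorem that the $1$-twist spin of any knot is trivial: inside $B\x I$, sweep the tangle $t$ around by combining the rotations $\rho_\varphi$ with a simultaneous shift of the tangle's parametrization, obtaining over $S^1\x[0,1)$ a family of embedded circles starting at $K$ (at parameter $0$) and becoming progressively less knotted. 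Because the coordinate on $I$ keeps the successive levels disjoint, continuity and injectivity of $F$ over $S^1\x[0,1)$ come for free as soon as each level is a genuine embedded curve. A single rigid full turn merely returns $K$, however, so the construction cannot stop there: it must be organized either as an accumulating sequence of shift-spins, speeding up as the parameter tends to $1$, or as one suitably twisted, non-level-preserving spiral.

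The key steps, in order, are: (1) reduce $I$-equivalence to producing an injective annulus in $\R^3\x I$ with the prescribed boundary; (2) fix the spinnable tangle presentation of $K$ inside $B$ together with the rotation $\rho_\varphi$; (3) realize the (iterated) shift-spin as a continuous family of embeddings over $S^1\x[0,1)$, hence as a continuous injection $F$ there; (4) analyze the limit as the parameter tends to $1$, proving that $F(\cdot,s)$ converges uniformly and that the limiting curve is an \emph{honest, tame} unknot — not merely a curve whose knotting has been compressed into a point — so that $F$ extends continuously and injectively over $S^1\x\{1\}$; (5) conclude.

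The main obstacle is step (4). It is cheap to write down a concordance from $K$ to an arbitrarily small scaled copy of $K$, but passing to the limit of such a thing only produces a curve that is wild at one point and still knotted, which is useless here. What the shift-spinning must achieve, and what has to be checked, is that the coordinated rotations and longitudinal shifts \emph{resolve} the knotting instead of merely shrinking it, so that the top curve is genuinely unknotted; maintaining simultaneous control of the uniform convergence, of injectivity in the limit, and of the knot type of the limiting curve is the technical heart of Giffen's argument. It is exactly the license to use a wild annulus that makes this possible, in contrast to the failure of the analogous statement for locally flat (PL or topological) concordance.
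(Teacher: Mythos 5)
The paper itself contains no proof of this statement: it is quoted as Giffen's theorem, with the shift-spinning construction in the unpublished manuscript \cite{Gif} and, for wild knots, the published accounts \cite{Mats} and \cite{AG}; the footnote explicitly warns that the simplified ``swirl'' versions in Daverman \cite{Da} and Hillman \cite{Hi} do not work for wild knots. So what you are proposing is, in effect, to reconstruct Giffen's construction --- and your outline stops exactly where that construction begins. Your step (4) (uniform convergence of the accumulating shift-spins, injectivity of the limiting map on $S^1\times\{1\}$, and unknottedness of the limiting curve) is not a technical verification to be deferred: it is the entire content of the theorem. You give no formula or explicit inductive scheme for the iterated ``spin with longitudinal shift'', and hence no argument for any of the three properties; naming this step ``the technical heart of Giffen's argument'' is an accurate description of the gap, not a way of closing it.

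There is a second gap, at step (2), and it sits precisely in the only case that matters. For a tame knot the conclusion is classical and easy: such a knot is isotopic (non-ambiently) to the unknot by shrinking the knotted tangle to a point while keeping a trivial arc fixed, and the trace of this isotopy is already an injective annulus in $\R^3\times I$, i.e.\ an $I$-equivalence. The theorem is quoted in the paper exactly because it covers knots that are wild at every point, such as the Bing sling \cite{Bi} --- otherwise it could not rule out $I$-equivalence invariants detecting wildness. For such a knot you cannot simply ``present $K$ as a $(1,1)$-tangle in a ball $B$ with ends at antipodal points $N,S$'' and rotate about the $NS$-axis: there is no guarantee of a ball whose boundary meets $K$ in exactly two points with controlled behaviour there, the complementary arc is itself wild rather than standard, and the local structure at $N$ and $S$ that a rotation-plus-shift implicitly exploits is unavailable. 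Coping with this total absence of local tameness is exactly what separates Giffen's actual shift-spinning (as written up in \cite{Mats}, \cite{AG}) from the simplified swirl, and it is untouched in your outline. (Your aside that the naive shrinking limit is ``wild at one point and still knotted'' is also inaccurate: for a tame knot, shrinking the knotted tangle while keeping a trivial arc fixed yields a tame unknot on top; the real obstruction to naive shrinking is wildness at every point, not knottedness of the limit curve.)
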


Giffen did not claim this in \cite{Gif}, as he explicitly worked only with PL knots; published 
accounts of his construction which explicitly treat wild knots can be found in \cite{Mats} and \cite{AG}.
To better understand Giffen's construction it may also help to consult its simplified version,
with a picture instead of the formula, in the books by Daverman \cite{Da}*{\S12} (where it is used to show 
that the double suspension of Mazur's homology $3$-sphere is $S^5$) and Hillman \cite{Hi}*{Theorem 1.9} 
(where it is used to show that $F$-isotopy implies $I$-equivalence), but beware that this simplified 
version does not work for wild knots.

\begin{maintheorem} \label{th2}
There exist uncountably many $I$-equivalence classes of $2$-component links.
\end{maintheorem}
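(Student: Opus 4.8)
The plan is to produce, for every subset $S\subseteq\N$, a $2$-component link $L_S$ of linking number $0$ whose sequence of Cochran invariants $\bigl(\beta_i(L_S)\bigr)_{i\ge1}$ determines $S$ — ideally with $\beta_i(L_S)\ne0$ exactly when $i\in S$ — together with the fact that each $\beta_i$ is an invariant of $I$-equivalence. Then $L_S$ and $L_T$ are non-$I$-equivalent whenever $S\ne T$, and since $\N$ has uncountably many subsets, Theorem \ref{th2} follows. (This is consistent with Giffen's theorem: $\beta_i$ is an invariant of links, not of knots.)

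\emph{Step 1 (the main point): each $\beta_i$ is an $I$-equivalence invariant.} Let $F\colon S^1\x I\x\{1,2\}\to S^3\x I$ be an injective continuous map cobounding links $L$ and $L'$. Being the continuous injective image of a compactum, the image $C$ of $F$ is homeomorphic to two disjoint annuli, so $\check H^*(C)\cong\check H^*(S^1\sqcup S^1)$; \v{C}ech--Alexander duality in $S^3\x I$ then yields $H_*\bigl((S^3\x I)\but C\bigr)\cong H_*(S^3\but L)$, with $H_1$ freely generated by meridians of $C$, and the inclusions of the two ends $S^3\but L$ and $S^3\but L'$ into $W:=(S^3\x I)\but C$ are homology isomorphisms matching meridians. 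They also match the $0$-framed longitudes, since the $0$-framing is pinned down homologically in the exterior and $F$ prescribes the identification of the ends component by component. Thus $W$ is a homology cobordism of link exteriors compatible with the meridian--longitude data, and Stallings' theorem promotes this to isomorphisms $\pi_1(S^3\but L)/\gamma_n\cong\pi_1(S^3\but L')/\gamma_n$ of all lower central quotients (here $\gamma_n$ is the $n$-th lower central subgroup) respecting meridians and longitudes. As $\beta_i$ is a function of these nilpotent quotients together with their peripheral structure — it agrees, after a change of variables, with a Milnor invariant $\bar\mu(12\cdots21)$, which is free of indeterminacy for $2$-component links — we conclude $\beta_i(L)=\beta_i(L')$. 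The subtle point, and the step I expect to be the main obstacle, is that $F$ is only continuous: $C$ is wild and $W$ is merely an open subset of $S^3\x I$ with pathological ends, so one must verify that the duality isomorphism, its compatibility with the peripheral structure, and the hypotheses of Stallings' theorem all survive in this topological generality.

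\emph{Step 2: the links $L_S$.} For each $i\ge1$ fix a PL $2$-component link $W_i$ with $\beta_i(W_i)\ne0$ and $\beta_j(W_i)=0$ for $j<i$ — for $i=1$ the Whitehead link, and in general Cochran's realising links obtained by iterated Bing/Whitehead doubling along a meridian — chosen so that the higher $\beta_j(W_i)$ are under control as well. For finite $S$ let $L_S=\#_{i\in S}W_i$, the component-wise band sum; by additivity of the $\beta$'s under band sums, $\bigl(\beta_i(L_S)\bigr)_i$ is the superposition of the sequences of the $W_i$, $i\in S$, and in particular determines $S$. For infinite $S=\{i_1<i_2<\cdots\}$, perform the successive band sums with $W_{i_1},W_{i_2},\dots$ inside a nested family of balls of radii tending to $0$, so that the PL links $L_{S\cap\{1,\dots,n\}}$ converge uniformly to an injective continuous $L_S\colon S^1\x\{1,2\}\to\R^3$ — a genuine link, generally wild. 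It then remains to show $\beta_i(L_S)=\beta_i(L_{S\cap\{1,\dots,n\}})$ for $n\ge i$, i.e.\ that the remaining infinitely many band sums (with $W_j$, $j>i$, done in ever-smaller balls) do not affect $\beta_i$; the reason is that $\beta_i$ depends only on finitely many terms of the lower central series of the exterior, already reached at a finite stage, and these further band sums alter only deeper layers — a ``continuity'' statement that, like Step 1, has to be checked in the wild category.

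\emph{Step 3.} If $S\ne T$ then $\bigl(\beta_i(L_S)\bigr)_i\ne\bigl(\beta_i(L_T)\bigr)_i$, so by Step 1 there is no $I$-equivalence between $L_S$ and $L_T$. Hence $S\mapsto[L_S]$ embeds the power set of $\N$ into the set of $I$-equivalence classes of $2$-component links, which is therefore uncountable.
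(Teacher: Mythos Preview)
Your Step~1 contains a fatal error: it is \emph{not} true that $\beta_i$ coincides with a Milnor $\bar\mu$-invariant free of indeterminacy. What holds is that the residue class $\bar\beta_n$ of $\beta_n$ modulo $\gcd(\beta_1,\dots,\beta_{n-1})$ equals $\bar\mu(\underbrace{1\ldots1}_{2n}22)$; the indeterminacy is genuine and nonzero as soon as some earlier $\beta_j$ is nonzero. Your Stallings-theorem argument does prove that the $\bar\mu$-invariants are $I$-equivalence invariants (this is classical), but it gives you access only to the $\bar\beta_i$, not to the integers $\beta_i$ themselves. And the $\bar\beta_i$ are far too weak for the theorem: since $\Z$ is Noetherian, the sequence of ideals $(\beta_1),(\beta_1,\beta_2),\dots$ stabilizes, so $\bar\beta_i=0$ for all large $i$, for \emph{every} link. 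More generally, by Higman's lemma any link has only finitely many nonzero $\bar\mu$-invariants (this is Corollary~\ref{mu-invariants} in the paper). So no amount of $\bar\mu$-information, even infinite combinations, can separate uncountably many $I$-equivalence classes. Concretely, in your construction with $\beta_i(L_S)\in\{0,1\}$: once $1\in S$ you have $\beta_1=1$, hence every subsequent $\bar\beta_i$ lives in $\Z/1=0$, and your invariant collapses.

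The paper circumvents this by proving $I$-equivalence invariance of the full integer $\beta_i$ \emph{directly}, via a geometric argument (Theorem~\ref{main}) that works for wild links: it replaces Seifert surfaces by $h$-Seifert surfaces, builds $h$-Seifert $3$-manifolds for the $I$-equivalence annuli in $S^4$ using Alexander duality (in \v Cech cohomology, to handle the wild setting), and shows that the iterated-intersection construction defining $\beta_i$ is carried through the bordism. It then exhibits explicit wild links $W_{n_1,n_2,\dots}$ with $\beta_i=n_i$ for any prescribed integer sequence, computed by hand from pictures; this is both simpler and more controlled than your band-sum limit. Your Step~2 could likely be made to work (the ``continuity'' you invoke is essentially Theorem~\ref{stability}), but without a correct Step~1 the argument does not go through.
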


Theorem \ref{th2} is proved in \S\ref{realization}.
Two uncountable families of pairwise non-$I$-equivalent wild links are shown in Figures \ref{w-infty} 
and \ref{m10-1c}.
Theorem \ref{th1} is a direct consequence of Theorem \ref{th2}, since there exist only countably many 
isotopy classes (and hence also $I$-equivalence classes) of PL links.
A constructive proof of Theorem \ref{th1}, producing an explicit link that is not isotopic (and actually not
$I$-equivalent) to any PL link, is discussed in \S\ref{rationality}.

It is well-known that Milnor's $\bar\mu$-invariants are well-defined for wild links and are invariants of
their isotopy \cite{Mi} as well as $I$-equivalence \cite{Sta}, \cite{Ca}, \cite{Co0}.
However, all $\bar\mu$-invariants that do not vanish identically for algebraic reasons are realizable by
PL links \cite{Co3}*{Theorem 7.2}, \cite{Orr} and so are all their finite combinations \cite{Kr}.
As for infinite combinations, it follows from Higman's lemma on well-quasi-orderings that they simply 
do not exist (see Theorem \ref{mu-invariants} below).

To prove Theorems \ref{th1} and \ref{th2}, we use a version of the Kojima--Yamasaki $\eta$-function, which was 
originally introduced as an invariant of $I$-equivalence of PL links with vanishing linking number
\cite{KY} (see also \cite{MR}*{\S2.3}).
In fact, Kojima and Yamasaki wrote in their introduction \cite{KY}: ``In the study of the $\eta$-function, 
we became aware of the impossibility to define it for wild links. 
The reason is essentially due to the fact that the knot module of some wild knot is not $\Lambda$-torsion.''
In the same paper \cite{KY} they also introduced another (closely related) invariant, called 
the $\lambda$-polynomial, which they succeeded to define for wild links, but showed to be non-invariant 
under isotopy.

A neat geometric reformulation of the $\eta$-function, useful for practical computations, was found by 
T. Cochran, who also extended it to higher-dimensional links \cite{Co}. 
Namely, Cochran showed that the $\eta$-function is equivalent by a change of variable to a rational power series 
$C_L(z)=\sum_i\beta_i(L) z^i$ with integer coefficients, which admit a simple description in terms of
iterated intersections of Seifert surfaces (see details in \S\ref{Cochran} below).

In \cite{Me}, the author extended each $\beta_i$ to a $\Q$-valued Vassiliev invariant $\beta_i^\mho$ 
of order $2k+1$ (of all two-component PL links, with possibly nonzero linking number) and proved that 
for each two-component link $L$ and every $i$ there exists an $\eps>0$ such that all PL links $L'$ that 
are $\eps$-close to $L$ (in the sup metric) have equal $\beta_i^\mho(L')$.
Thus, each $\beta_i^\mho$ further extends to wild links; moreover, the extended invariant, 
still denoted $\beta_i^\mho$, is invariant under isotopy \cite{Me}.
But of course the resulting extension ``by continuity'' $C_L^\mho$ of the power series $C_L$ need not be 
rational for wild $L$.
Basically, this is where Theorem \ref{th1} comes from.

While the rationality of $C_L$ for PL links $L$ is well-known \cite{Co}, \cite{Jin1}, \cite{Jin2}, \cite{GL}, 
we note that its meaning can be further clarified by a noteworthy formula.
Namely, $C_L$ turns out to be a quotient of two one-variable Conway polynomials:

\begin{maintheorem} \label{th3} Given a PL link $L=(K_0,K_1)$ with $\lk(L)=0$, let $\Lambda=(K_0\#_b K_1,-K_0^+)$, 
where $K_0\#_b K_1$ is any band connected sum and $-K_0^+$ is a zero pushoff of $K_0$ with reversed orientation,
disjoint from the band.
Then \[-zC_L(-z^2)=\frac{\nabla_\Lambda(z)}{\nabla_{K_1}(z)}.\]
\end{maintheorem}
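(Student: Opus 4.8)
The plan is to evaluate both Conway polynomials on a single connected Seifert surface for $\Lambda$ that carries a Seifert surface for $K_1$ as a subsurface; then $\nabla_\Lambda/\nabla_{K_1}$ becomes a Schur complement, and the remaining task is to recognize that Schur complement as $C_L$ via the Seifert-surface description of the $\beta_i$ recalled in \S\ref{Cochran}. First I would construct the surface. Choose a Seifert surface $V$ for $K_1$; since $\lk(K_0,K_1)=0$ the points of $K_0\cap V$ cancel in pairs, and tubing $V$ along short arcs of $K_0$ joining such pairs removes all of them at the cost of raising the genus (this merely replaces the Seifert surface of $K_1$ by another one), so I may assume $K_0\cap V=\varnothing$. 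Let $A$ be the zero-framed annulus cobounded by $K_0$ and $-K_0^{+}$, pushed into a thin neighbourhood of $K_0$ so that $A\cap V=\varnothing$, and route the band $b$ so that its interior misses $V$ and $A$. Then $F:=V\cup b\cup A$ is an embedded connected oriented surface with $\partial F=(K_0\#_bK_1)\sqcup(-K_0^{+})$ of the same genus as $V$; a basis of $H_1(F)$ consists of a symplectic basis $x_1,\dots,x_{2n}$ of $H_1(V)$ (chosen in $V$ away from the band) together with the core circle $c$ of $A$, which is homologous to the boundary component $-K_0^{+}$ and hence independent of the $x_i$.

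Next I would read off the Seifert matrix $M$ of $F$ in this basis. Near each $x_i$ the surface coincides with $V$, so the corresponding block of $M$ is a Seifert matrix $M_1$ of $K_1$; near $c$ the surface coincides with $A$, which realizes the zero framing of $K_0$, so $\theta_F(c,c)=0$; and since $c$ is disjoint from the $x_i$ on $F$ and is a parallel copy of $K_0$, one gets $\theta_F(c,x_i)=\theta_F(x_i,c)=\lk(K_0,x_i)=:u_i$. Writing $u=(u_1,\dots,u_{2n})$, $z=t^{1/2}-t^{-1/2}$ and $N=t^{1/2}M_1-t^{-1/2}M_1^{T}$, the block structure gives
\[
M=\begin{pmatrix}0&u\\ u^{T}&M_1\end{pmatrix},\qquad \nabla_\Lambda(z)=\det\begin{pmatrix}0&zu\\ zu^{T}&N\end{pmatrix}=-z^{2}\bigl(uN^{-1}u^{T}\bigr)\det N,
\]
and since $\det N=\nabla_{K_1}(z)$ this yields $\nabla_\Lambda(z)/\nabla_{K_1}(z)=-z^{2}\,uN^{-1}u^{T}$.

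It remains to identify $z\,uN^{-1}u^{T}$ with $C_L(-z^{2})$. A short manipulation rewrites it as $z\,uN^{-1}u^{T}=(t-1)\,u(tM_1-M_1^{T})^{-1}u^{T}$, which --- using that $K_0\cap V=\varnothing$ makes the class of $K_0$ in the Alexander module of $K_1$ have constant coordinate vector $u$ --- is precisely the Blanchfield self-pairing of $[K_0]$, i.e.\ (a standard form of) the Kojima--Yamasaki $\eta$-function of $L$. Here $z\,uN^{-1}u^{T}$ is invariant under $t\leftrightarrow t^{-1}$ (as $N(t^{-1})=-N(t)^{T}$) and vanishes at $t=1$, so it is genuinely a rational function of $-z^{2}=(1-t)(1-t^{-1})$, and that substitution is exactly Cochran's change of variable from $\eta$ to $C$; matching with the description of the $\beta_i$ in \S\ref{Cochran} then gives $z\,uN^{-1}u^{T}=C_L(-z^{2})$, up to the sign and orientation conventions built into the statement, which is the asserted formula $\nabla_\Lambda(z)/\nabla_{K_1}(z)=-z\,C_L(-z^{2})$.

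Two points need genuine care. The first is the embeddedness of $F$ with exactly the structure above: the homological bookkeeping is automatic, but simultaneously fitting $V$, the pushoff annulus $A$ and the given band $b$ together without spurious intersections has to be carried out honestly (the tubing of $V$ is what makes it possible), and one might instead prefer to argue from a clasp complex for $L$. The second, which I expect to be the main obstacle, is the final identification: pinning down which normalization of the link Conway polynomial and which sign of $C_L$ produce the factor $-z$ out front, and checking that the closed form $z\,uN^{-1}u^{T}$ really is Cochran's rational function $C_L(-z^{2})$ rather than merely some rational function agreeing with it to low order.
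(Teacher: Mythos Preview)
Your argument is essentially correct and takes a genuinely different route from the paper's. The paper does not compute $\nabla_\Lambda$ directly; instead it first proves a more general identity (Theorem~\ref{formula}(a)) $\nabla_L(z)=z\nabla_K(z)\bigl(\lk(L)-C_\Lambda(-z^2)\bigr)$, obtained by quoting the Tsukamoto--Yasuhara factorization theorem to express $\nabla_L/\nabla_K$ as a covering linking number in the infinite cyclic cover of $S^3\setminus K$, identifying that covering linking number with the Kojima--Yamasaki $\eta$-function, and then invoking Cochran's theorem $\eta=C$. Theorem~\ref{th3} is then deduced by applying this identity a second time to the link $\bar L=\Lambda$ with a short band $\bar b$ running across the pushoff annulus, so that the iterated construction returns (up to isotopy and orientation) to $L$ and to the knot $K_1$.

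Your approach replaces the black-box appeal to Tsukamoto--Yasuhara by the explicit Seifert surface $F=V\cup b\cup A$ and a Schur-complement computation; what remains is the same endgame, namely identifying the resulting rational function with $\eta_L$ and then quoting Cochran's $\eta=C$. In effect your Seifert-matrix calculation \emph{is} a direct proof of the relevant special case of Tsukamoto--Yasuhara, so your route is more self-contained; the paper's route, on the other hand, yields the extra dividend of part~(a), which holds without the hypothesis $\lk(L)=0$. The two points you flag are exactly the right ones: arranging $V$, $A$ and the \emph{given} band $b$ to be pairwise disjoint in their interiors needs the tubing you describe (plus a small collar adjustment of $V$ along the arc $b\cap K_1$), and the identification $(t-1)\,u\,(tM_1-M_1^{T})^{-1}u^{T}=\eta_L(t)$, while standard (it is the Seifert-matrix formula for the covering self-linking of the lift of $K_0$, cf.\ Jin \cite{Jin1}), should be stated and cited rather than asserted, since the overall sign hinges on it.
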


As explained in \S\ref{rationality}, Theorem \ref{th3} follows from the Tsukamoto--Yasuhara factorization 
theorem \cite{TY}.
In \S\ref{rationality} we also discuss some other results in this spirit, including one which offers another 
``explanation'' of the rationality of $C_L$ for PL links (see Remark \ref{conpol}).
 
Theorem \ref{th2} has the following source:

\begin{maintheorem} \label{th4} $C_L^\mho$ is an $I$-equivalence invariant for wild links $L$ with $\lk(L)=0$.
\end{maintheorem}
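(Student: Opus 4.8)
The plan is to reduce the assertion to its PL analogue, using the continuity property of the invariants $\beta_i^\mho$ established in \cite{Me}. Recall first that for a PL link $L=(K_0,K_1)$ with $\lk(L)=0$ the series $C_L$ — equivalently, the Kojima--Yamasaki $\eta$-function — is a known invariant of $I$-equivalence within the PL category \cite{KY}, \cite{Co}; since on such links $\beta_i^\mho$ restricts to Cochran's $\beta_i$, this is precisely the PL case of the theorem. Note also that the linking number is a concordance invariant, hence unchanged under $I$-equivalence, so the hypothesis $\lk(L)=0$ persists and it suffices to argue inside this class.

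So let $L_0,L_1$ be wild $2$-component links with $\lk=0$ that are $I$-equivalent via an injective continuous $F\:S^1\x I\x\{1,2\}\to\R^3\x I$ restricting to $L_0$ on $S^1\x\{0\}\x\{1,2\}$ and to $L_1$ on $S^1\x\{1\}\x\{1,2\}$. Fix $i$; the continuity property supplies an $\eps>0$ so that every PL link $\eps$-close to $L_0$, respectively $L_1$, has $\beta_i^\mho$ equal to $\beta_i^\mho(L_0)$, respectively $\beta_i^\mho(L_1)$. Thus everything reduces to the geometric statement that there exist PL links $L_0',L_1'$ with $L_0'$ $\eps$-close to $L_0$, $L_1'$ $\eps$-close to $L_1$, which are themselves PL $I$-equivalent. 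Granting this, $\beta_i^\mho(L_0)=\beta_i^\mho(L_0')=\beta_i^\mho(L_1')=\beta_i^\mho(L_1)$ — the middle equality by the PL case, after noting that $\lk(L_0')=\lk(L_1')=0$ by closeness — and letting $i$ vary yields $C_{L_0}^\mho=C_{L_1}^\mho$. It is essential here that $F$ be exploited as a whole: replacing $L_0,L_1$ by nearby PL links one at a time does not help, since a PL link close to a wild link need not be $I$-equivalent to it.

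To manufacture $L_0',L_1'$ I would thicken the wild annulus. After an ambient isotopy of $\R^3\x I$ one may assume $F$ is collar-like near $\R^3\x\{0,1\}$. Choose a PL triangulation of $\R^3\x I$, with $\R^3\x\{0\}$ and $\R^3\x\{1\}$ as subcomplexes, of mesh so small that a regular neighborhood $N$ of $F(S^1\x I\x\{1,2\})$ is a thin PL $4$-manifold neighborhood of the annulus; then $V_0:=N\cap(\R^3\x\{0\})$ and $V_1:=N\cap(\R^3\x\{1\})$ are thin regular neighborhoods of $L_0$ and $L_1$ in $\R^3$, each a disjoint union of two solid tori having the respective components of $L_0$ or $L_1$ as spines. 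Let $L_0'$ and $L_1'$ parametrize the unions of core circles of $V_0$ and of $V_1$; since these solid tori are thin, $L_0'$ and $L_1'$ are $\eps$-close to $L_0$ and $L_1$. One now wants a PL embedded annulus in $N$ joining $L_0'$ to $L_1'$, which exists as soon as $N$ is (two copies of) a trivial $D^2$-bundle over $S^1\x I$, compatibly with the ends $V_0,V_1$: the bounding annulus is then the core $S^1\x I\x\{0\}$ of each component of $N$.

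The main obstacle is exactly this last point — that a neighborhood of the wild $I$-equivalence annulus can be arranged to be a standard $D^2$-bundle. Unlike in the locally flat case, a thin neighborhood of a wildly embedded surface in a $4$-manifold may fail to be a manifold, let alone a product bundle, and already for a wild circle in $\R^3$ a thin regular neighborhood need not be a solid torus; so this step is genuine $4$-dimensional wild PL topology rather than a formal argument. What should make it go through is that the boundary links of the PL annulus are only required to be $\eps$-close to $L_0,L_1$, not equal to them, so there is room to replace $L_0,L_1$ by the PL cores $L_0',L_1'$ and to adjust $N$; I expect that controlling the neighborhood of $F(S^1\x I\x\{1,2\})$, by the same approximation and engulfing techniques used elsewhere in the paper, is where the real work of the proof lies.
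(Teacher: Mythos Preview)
Your approach differs fundamentally from the paper's and contains a gap you yourself identify as the crux: that a regular neighborhood of the wild $I$-equivalence annulus in $\R^3\times I$ can be taken to be a trivial $D^2$-bundle over $S^1\times I$. This is a genuine problem in wild codimension-two topology in dimension four, and contrary to your expectation, the paper contains no ``approximation and engulfing techniques'' that would address it; nowhere does the paper attempt to replace a wild $I$-equivalence by a PL one. Your reduction-to-PL strategy would require something like an approximation theorem for topological surfaces in $4$-manifolds, which is far from routine and not supplied here.

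Instead, the paper proves Theorem~\ref{th4} by a direct route that avoids PL approximation of the $I$-equivalence entirely. It defines each $\beta_i$ for wild links via \emph{$h$-Seifert surfaces} (smooth properly embedded surfaces in the open complement, not required to extend to compact surfaces bounded by the knot), and proves directly (Theorem~\ref{main}) that these are $I$-equivalence invariants: given a wild $I$-equivalence $(\Delta,\Delta')$ in $S^3\times I$, one suspends $\Delta$ to a $2$-sphere $\hat\Delta\subset S^4$ and uses Alexander duality (in its Steenrod/\v Cech form, valid for arbitrary compacta) to represent the dual class by a map $(S^4\but\hat\Delta,\Delta')\to(S^1,\{1\})$, whose regular preimage is a smooth $3$-manifold in $S^4\but\hat\Delta$ disjoint from $\Delta'$ and restricting to the chosen $h$-Seifert surfaces at the two ends. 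Intersecting with the analogous $3$-manifold for $\Delta'$ gives a framed bordism between the curves $F_0$ and $F_1$, proving invariance of $\beta$; an inductive ``weak $I$-equivalence'' argument then handles all $\beta_i$. Theorem~\ref{stability} shows that this directly defined $\beta_i$ agrees with $\beta_i^\mho$, and Theorem~\ref{th4} follows immediately. The key point you are missing is that the wild annulus is used only as a homological object whose complement supports smooth hypersurfaces; no tubular neighborhood of it is ever needed.
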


By contrast, $C_L^\mho$ is not an $I$-equivalence invariant, nor even a concordance invariant in the case of 
PL links $L$ with $\lk(L)=1$ (see \cite{MR}*{\S2.2}). 

In order to prove Theorems \ref{th2} and \ref{th4} and to facilitate computation of $C_L^\mho$ for specific wild 
links, we observe in \S\ref{Cochran} below that a minor modification of Cochran's original construction actually 
applies directly to wild links, without using PL approximations.
(As a byproduct, the proofs of Theorems \ref{th1} and \ref{th2} do not really depend on \cite{Me}.)
The only real modification is to replace Seifert surfaces by $h$-Seifert surfaces.
(Wild links generally do not have Seifert surfaces, at least if the latter are understood as manifolds with
boundary.)
However, to make sure that everything goes through in the more general setting, we have to include the details,
taking into account that they are not always present in Cochran's paper \cite{Co}.% 
\footnote{Apart from the explicitly omitted arguments, the assertion ``$H_{n+1}(E(W))$ is trivial'' in 
the proof of the ``cornerstone'' Theorem 4.2 in \cite{Co} is incorrect for $n=1$, so this proof needs 
to be repaired.
Some details appear in a slightly different context in a later paper by Cochran 
\cite{Co4}*{Proof of Proposition 2.5}.}
Indeed, we make use of Steenrod homology and \v Cech cohomology, or rather the Alexander duality which they 
satisfy.%
\footnote{See \cite{M2}*{\S4} for a down to earth exposition in a few pages, which explains, in particular, that 
the Jordan curve theorem is not something deep and mysterious, as many people think, but a trivial consequence 
of the Poincar\'e duality for PL manifolds along with Milnor's lemma on mapping telescopes.}

\section{Invariants} \label{Cochran}

Let us order the components of $S^1\sqcup\dots\sqcup S^1$ and fix orientations of $S^1$ and $S^3$.
Then every link $l\:S^1\sqcup\dots\sqcup S^1\to S^3$ (possibly wild) determines an {\it oriented ordered link}, 
that is, the subset $l(S^1\sqcup\dots\sqcup S^1)$ of $S^3$ endowed with a numbering of its components and with 
a choice of their orientations.
Conversely, since every orientation preserving homeomorphism of $S^1$ is isotopic to the identity, the isotopy 
class of a link is determined by the corresponding oriented ordered link.
A one-component oriented ordered link is called an {\it oriented knot}.
Breaking with the definitions of \S\ref{intro}, in what follows we will use ``link'' in the sense 
``oriented ordered link'' and ``knot'' in the sense ``oriented knot''.

An {\it $h$-Seifert surface} for a knot $K$ is an oriented properly embedded smooth surface in 
$S^3\but K$ whose class in $H_2^\infty(S^3\but K)\simeq H_2(S^3,K)$ corresponds to $[K]\in H_1(K)$ under 
the isomorphism $\partial_*\:H_2(S^3,K)\to H_1(K)$.
An $h$-Seifert surface need not be a true Seifert surface (even if $K$ is PL or smooth) because its closure 
in $S^3$ may fail to be a manifold with boundary.

Let $L=(K,K')$ be a two-component link $S^3$ with $\lk(L)=0$.

\begin{lemma} \label{Seifert}
$K$ has an $h$-Seifert surface that is disjoint from $K'$.
\end{lemma}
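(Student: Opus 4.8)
The plan is to realize an $h$-Seifert surface for $K$ as the preimage of a regular value of a smooth map $g\colon S^3\but K\to S^1$, and to use the hypothesis $\lk(L)=0$ to push this preimage off $K'$. First I would recall that $S^3\but K$ is an open oriented $3$-manifold, hence an ANR with the homotopy type of a CW complex, so that $[S^3\but K,S^1]=H^1(S^3\but K;\Z)$; and that, since $K$ is homeomorphic to $S^1$, Alexander duality in the Steenrod homology / \v Cech cohomology form used in this paper gives $H^1(S^3\but K;\Z)\simeq\Z$; here the duality isomorphisms $H_2^\infty(S^3\but K)\simeq H_2(S^3,K)$ and $H_2^\infty(S^3\but K)\simeq H^1(S^3\but K)$ are valid for an arbitrary, possibly wild, $K$. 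Fix the generator $\alpha$ of $H^1(S^3\but K;\Z)$ that is Poincar\'e dual to the $h$-Seifert class (the one carried to $[K]$ by $\partial_*$), together with a smooth map $g\colon S^3\but K\to S^1$ with $[g]=\alpha$. For any regular value $t$, the preimage $g^{-1}(t)$, cooriented by $g$ and oriented using the orientation of $S^3$, is a smooth, oriented, properly embedded surface in $S^3\but K$ whose class is $\alpha$, i.e.\ an $h$-Seifert surface for $K$. Hence it is enough to produce such a $g$ with $g(K')\ne S^1$: then any regular value $t\notin g(K')$ satisfies $g^{-1}(t)\cap K'=\emptyset$.

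This is where $\lk(L)=0$ comes in. The class $[K']\in H_1(S^3\but K;\Z)\simeq\Z$ equals $\lk(L)$ times a generator, hence vanishes, so $g|_{K'}\colon K'\to S^1$ has degree $0$ and therefore lifts along the exponential map $\R\to S^1$ to a continuous function $u\colon K'\to\R$. Since $K'$ is a closed subset of the metrizable (hence normal) space $S^3\but K$, Tietze's theorem extends $u$ to a continuous function $\hat u\colon S^3\but K\to\R$. Now replace $g$ by $g'=g\cdot e^{-2\pi i\hat u}$. The factor $e^{-2\pi i\hat u}$ lifts to $\R$, hence is null-homotopic, so $[g']=[g]=\alpha$; but $g'|_{K'}=(g|_{K'})\,e^{-2\pi i u}\equiv 1$, so $g'(K')$ is a single point, in particular a proper subset of $S^1$.

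To finish, I would approximate $g'$ by a smooth map $g''\colon S^3\but K\to S^1$ that is homotopic to $g'$ (so $[g'']=\alpha$) and close enough to $g'$ on a neighborhood of the compact set $K'$ that $g''(K')$ lies in a proper subarc of $S^1$. By Sard's theorem pick a regular value $t$ of $g''$ with $t\notin g''(K')$. Then $F=(g'')^{-1}(t)$ is a smooth, oriented, properly embedded surface in $S^3\but K$ representing $\alpha$ and satisfying $F\cap K'=(g''|_{K'})^{-1}(t)=\emptyset$; after the correct choice of $\alpha$ and of orientation of $F$, this is an $h$-Seifert surface for $K$ disjoint from $K'$.

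I do not expect a genuine obstacle once the phase-correction $g\mapsto g\cdot e^{-2\pi i\hat u}$ is spotted; the two points needing care are both ``soft''. First, the duality statements --- in particular $H^1(S^3\but K;\Z)\simeq\Z$ and the identification of $g^{-1}(t)$ with the $h$-Seifert class --- have to be justified for wild $K$, and this is exactly where Steenrod homology and \v Cech cohomology, and the Alexander duality between them, are needed in place of their singular analogues. Second, it is worth stressing that the possible wildness of $K'$ is completely harmless: no tubular neighborhood of $K'$, and no claim that $K'\hookrightarrow S^3\but K$ is a cofibration, is used anywhere --- the embedding of $K'$ enters only through Tietze's theorem (which needs only that $K'$ is closed) and through a smoothing estimate near $K'$ (which needs only that $K'$ is compact).
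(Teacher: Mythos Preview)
Your argument is correct, and the underlying strategy is the same as the paper's: produce a map $S^3\setminus K\to S^1$ in the Alexander-dual class that sends $K'$ to a single point, then take the preimage of a regular value away from that point. The difference is packaging. The paper argues via relative cohomology: since the inclusion $H_1(K')\to H_1(S^3\setminus K)$ is zero, the restriction $H^1(S^3\setminus K)\to H^1(K')$ is zero (naturality of the cap product), so the dual of $[K]$ lifts to a unique class $\xi_K\in H^1(S^3\setminus K,\,K')$, and this relative class is represented directly by a map of pairs $f_K\colon(S^3\setminus K,\,K')\to(S^1,\{1\})$; a smooth transverse representative yields $\Sigma=f_K^{-1}(-1)$. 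Your phase correction $g\mapsto g\cdot e^{-2\pi i\hat u}$, with $\hat u$ a Tietze extension of a lift of $g|_{K'}$, is exactly a bare-hands construction of such a relative representative --- you are unpacking the step the paper invokes in one line. Your route is slightly more elementary (no need to cite representability of relative $H^1$ by maps of pairs); the paper's formulation has the advantage that it transplants verbatim to the $4$-dimensional situation in the proof of Theorem~\ref{main}(a), where one again lifts to a relative class $\xi_{\hat\Delta}\in H^1(S^4\setminus\hat\Delta,\,\Delta')$ and represents it by a map of pairs.
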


\begin{proof}
The inclusion map $H_1(K')\to H_1(S^3\but K)$ is zero, and consequently so is the restriction map 
$H^1(S^3\but K)\to H^1(K')$ (using the naturality of the $\smallfrown$-product).
Hence the image of $[K]$ under the Alexander duality $H_1(K)\to H^1(S^3\but K)$ is the image of 
a class $\xi_K\in H^1(S^3\but K,\,K')$, which is unique since the restriction map 
$H^0(S^3\but K)\to H^0(K')$ is an isomorphism. 
This $\xi_K$ may be identified with the homotopy class of a map of pairs
$f_K\:(S^3\but K,\,K')\to (S^1,\,\{1\})$.
We may assume that $f_K$ is a smooth map transverse to $-1\in S^1$.
Then $\Sigma:=f^{-1}(-1)$ is the desired $h$-Seifert surface.
\end{proof}

Now let $\Sigma$ be an $h$-Seifert surface for $K$ that is disjoint from $K'$ and $\Sigma'$ be 
an $h$-Seifert surface for $K'$ that is disjoint from $K$.
We may assume that $\Sigma$ and $\Sigma'$ meet transversely along a closed oriented $1$-manifold $F$.
Since $\Sigma$ and $\Sigma'$ are oriented, they are framed (i.e.\ their normal bundles in $S^3$ are endowed 
with natural framings), and hence so is $F$.

The {\it Sato--Levine invariant} $\beta(L)$ is the self-linking number of $F$, that is, the total linking 
number $\lk(F,F^{++})$, where $F^{++}$ is a pushoff of $F$ along the sum of the two vectors of the framing.
In other words, $\beta(L)$ is the framed bordism class of $F$, which by the Pontryagin construction
is identified with an element of $\pi_3(S^2)$.

Next, by attaching a finger to $\Sigma$ we may assume that $F$ is nonempty, and by attaching tubes to $\Sigma$ 
along paths in $\Sigma'$ we may assume that $F$ is connected.%
\footnote{An alternative approach is to define $\beta_i$ for possibly disconnected $F$.
The proof of Theorem \ref{main} applies without changes to these more general invariants.}
Then $\partial_1 L:=(F^{++},K')$ is a two-component link.
Since $F^{++}$ is disjoint from $\Sigma'$, we have $\lk(\partial_1 L)=0$.
Let $\beta_1(L)=\beta(L)$ and $\beta_{n+1}(L)=\beta_n(\partial_1 L)$.
In closed terms, $\beta_n(L)=\beta(\partial_1\dots\partial_1 L)$.

\begin{theorem} \label{main}
(a) $\beta$ is well-defined and is invariant under $I$-equivalence.

(b) Each $\beta_i$ is well-defined and is invariant under $I$-equivalence.
\end{theorem}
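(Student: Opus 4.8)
The plan is to reduce everything to the Sato--Levine invariant (part (a)), since part (b) then follows by iterating the construction $L \mapsto \partial_1 L$ and checking that each step is compatible with $I$-equivalence. For part (a) the first task is \emph{well-definedness}: the invariant $\beta(L)$ was defined by choosing $h$-Seifert surfaces $\Sigma$ for $K$ and $\Sigma'$ for $K'$, making them transverse, and taking the framed bordism class of $F = \Sigma \cap \Sigma'$ in $\pi_3(S^2) \simeq \Z$. I would first show this is independent of the choices. Any two $h$-Seifert surfaces for $K$ disjoint from $K'$ are homologous in $H_2(S^3 \setminus K,\, K')$ — indeed, by Lemma~\ref{Seifert} and the uniqueness argument there, they correspond to the same class $\xi_K \in H^1(S^3 \setminus K,\, K')$, hence to homotopic maps $f_K$, hence by transversality to cobordant preimages — so $\Sigma$ and $\tilde\Sigma$ cobound an embedded $3$-chain (a region) $W$ in $S^3 \setminus K'$. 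Then $F = \Sigma \cap \Sigma'$ and $\tilde F = \tilde\Sigma \cap \Sigma'$ cobound $W \cap \Sigma'$ as framed $1$-manifolds, so they have the same class in $\pi_3(S^2)$. The same argument applies to varying $\Sigma'$. The ``finger'' and ``tube'' modifications used to make $F$ nonempty and connected change $F$ by a framed bordism (adding a small unknotted circle with trivial framing, resp. a surgery along an embedded arc in $\Sigma'$), so they do not affect the bordism class either.

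Next, \emph{$I$-equivalence invariance} of $\beta$. Suppose $L = (K, K')$ and $L_0 = (K_0, K_0')$ are $I$-equivalent, so they cobound an injective continuous map $S^1 \times I \times \{0,1\} \to S^3 \times I$, i.e.\ disjoint ``annuli'' $A$ (from $K$ to $K_0$) and $A'$ (from $K'$ to $K_0'$) in $S^3 \times I$. The strategy is to build, in $S^3 \times I$, a $3$-dimensional ``Seifert cobordism'' between $\Sigma$ and a choice of $h$-Seifert surface $\Sigma_0$ for $K_0$: namely I want a properly embedded hypersurface $\mathcal{W} \subset (S^3 \times I) \setminus A$ whose boundary is $\Sigma \cup \Sigma_0$ (plus possibly pieces on $S^3 \times \partial I$), disjoint from $A'$. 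This is obtained by the same Alexander-duality mechanism as Lemma~\ref{Seifert}, now applied in the complement $(S^3 \times I) \setminus A$: the class $[A] \in H_2$ is Alexander dual to a cohomology class represented by a map to $S^1$, which restricts correctly to the two ends because of the given $h$-Seifert surfaces there, and which can be taken to vanish near $A'$ since $\lk$ is zero at both ends (the relevant restriction map on $H^0$ is again an isomorphism, as $A'$ is connected). Making this map transverse gives $\mathcal{W}$; intersecting with the analogous $\mathcal{W}'$ built from $A'$ gives a framed cobordism in $S^3 \times I$ between $F$ and $F_0$. Hence $\beta(L) = \beta(L_0)$.

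Finally, for part (b) I would verify that the operation $L \mapsto \partial_1 L = (F^{++}, K')$ carries $I$-equivalence to $I$-equivalence. Given the $h$-Seifert surfaces and the cobounding hypersurfaces $\mathcal{W}, \mathcal{W}'$ from the previous paragraph, the intersection $\mathcal{W} \cap \mathcal{W}'$ is an embedded $1$-manifold in $S^3 \times I$ cobounding $F$ and $F_0$; pushing it off along its framing produces an annulus from $F^{++}$ to $F_0^{++}$, disjoint from $\mathcal{W}'$ and hence from $\Sigma_0'$ and $K_0'$, while $\mathcal{W}'$ itself restricts to an annulus from $K'$ to $K_0'$. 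These two annuli are disjoint, so $\partial_1 L$ and $\partial_1 L_0$ are $I$-equivalent; moreover $F^{++}$ may be kept connected along the cobordism. Then $\beta_{n+1}(L) = \beta_n(\partial_1 L) = \beta_n(\partial_1 L_0) = \beta_{n+1}(L_0)$ by induction, using part (a) as the base case and the inductive hypothesis applied to $\partial_1 L$; well-definedness of $\beta_{n+1}$ follows from well-definedness of $\beta_n$ together with the fact, just established for cobordisms, that different auxiliary choices in forming $\partial_1 L$ yield $I$-equivalent (indeed framed-cobordant) results.

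I expect the main obstacle to be the careful construction of the hypersurface $\mathcal{W}$ in $(S^3 \times I)\setminus A$ with \emph{prescribed} boundary behavior at the two ends $S^3 \times \{0,1\}$. One must ensure the map to $S^1$ dual to $[A]$ can be homotoped to agree with the chosen $f_K$ and $f_{K_0}$ on the ends (an obstruction-theory / relative-homotopy argument, hinging on the same $H^0$-isomorphism that gave uniqueness in Lemma~\ref{Seifert}), and simultaneously to be locally constant near $A'$. This is exactly the point where, as the footnote in the excerpt warns, Cochran's original argument has a gap for $n=1$ (the relevant homology group he calls trivial is not), so the fix is to argue directly with Steenrod homology / \v{C}ech cohomology and Alexander duality rather than with a na\"ive Mayer--Vietoris computation. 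Everything else — transversality, the Pontryagin construction identifying framed $1$-manifolds with $\pi_3(S^2)$, the finger/tube moves — is standard.
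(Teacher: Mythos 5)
Your treatment of part (a) is essentially the paper's argument: the paper also produces the $3$-dimensional bordisms by the Alexander-duality/Pontryagin mechanism of Lemma \ref{Seifert} (it cones the $I$-equivalence off to $\hat\Delta\subset S^4$ and applies Alexander duality there, rather than arguing rel the two ends of $S^3\times I$, which sidesteps the boundary-matching issue you correctly flag), and your separate well-definedness check is subsumed in that argument by applying it to a product $I$-equivalence with different choices of $h$-Seifert surfaces at the two ends.

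Part (b), however, has a genuine gap. The transverse intersection $\mathcal{W}\cap\mathcal{W}'$ of two $3$-manifolds in the $4$-manifold $S^3\times I$ is a \emph{surface} $\Phi$ with boundary $F_0\cup -F_1$, not a $1$-manifold, and there is no way to arrange that it be an annulus: it has uncontrolled genus (and possibly closed components). So its pushoff $\Phi^{++}$ is not an annulus from $F_0^{++}$ to $F_1^{++}$, you have not shown that $\partial_1 L_0$ and $\partial_1 L_1$ are $I$-equivalent, and your induction does not close. Moreover, you cannot simply rerun the part (a) argument with a higher-genus first-component cobordism: the Alexander-duality construction of the bordism $W'$ for the second component uses that $H_1(K_0)\to H_1(\Delta)$ is an isomorphism, which fails once $\Delta$ has positive genus. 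The paper's fix is to introduce \emph{weak $I$-equivalence}: the first-component cobordism is allowed to be an arbitrary oriented surface with boundary $K_0\cup -K_1$, the second component is still required to be an annulus, and the existence of a bordism $W'$ $h$-bounded by $\Delta'$ and disjoint from $\Delta$ is taken as a \emph{hypothesis} rather than derived. One then proves that $\beta$ is invariant under weak $I$-equivalence and that $\partial_1$ carries weak $I$-equivalences to weak $I$-equivalences (the pair $(\Phi^{++},\Delta')$, with the same $W'$ as witness, does the job), and runs the induction with this strengthened statement; the same weakening is what gives well-definedness of the higher $\beta_i$, since different auxiliary choices in forming $\partial_1 L$ yield only weakly $I$-equivalent derived links.
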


\begin{proof}[Proof. (a)] Suppose that $L_0=(K_0,K'_0)$ and $L_1=(K_1,K'_1)$ are links 
related by an $I$-equivalence $\Lambda=(\Delta,\Delta')$ in $S^3\x I$.
Let $\Sigma_i$, $\Sigma'_i$, $F_i$ and $F^{++}_i$ be as above for each $L_i$, where $i=0,1$.
By the Pontryagin construction we may assume that the $\Sigma_i$ arose from certain $\xi_{K_i}$ and $f_{K_i}$
as in the proof of Lemma \ref{Seifert}.
 
Let $\hat\Delta=CK_0\cup\Delta\cup CK_1\subset C(S^3\x 0)\cup S^3\x I\cup C(S^3\x 1)=S^4$.
Since the inclusion map $H_1(K_0')\to H_1(\Delta')$ is an isomorphism and $K_0'$ bounds $\Sigma_0'$
in $S^4\but\hat\Delta$, the inclusion map $H_1(\Delta')\to H_1(S^4\but\hat\Delta)$ is zero.
Consequently, so is the restriction map $H^1(S^4\but\hat\Delta)\to H^1(\Delta')$ (using the naturality of 
the $\smallfrown$-product).
Then the image of $[\hat\Delta]$ under the Alexander duality $H_2(\hat\Delta)\to H^1(S^4\but\hat\Delta)$ is 
the image of a $\xi_{\hat\Delta}\in H^1(S^4\but\hat\Delta,\,\Delta')$, which is unique since 
the restriction map $H^0(S^4\but\hat\Delta)\to H^0(\Delta')$ is an isomorphism. 
This $\xi_{\hat\Delta}$ may be identified with the homotopy class of a map
$f_{\hat\Delta}\:(S^4\but\hat\Delta,\,\Delta')\to (S^1,\,\{1\})$.
It is easy to see that the classes $\xi_{K_i}\in H^1(S^3\but K_i,\,K'_i)$, where $i=0,1$, are 
the restrictions of $\xi_{\hat\Delta}$.
Then we may assume that $f_{K_0}$ and $f_{K_1}$ are the restrictions of $f_{\hat\Delta}$ and that 
$f_{\hat\Delta}$ is a smooth map transverse to $-1\in S^1$.
Then $\hat W:=f_{\hat\Delta}^{-1}(-1)$ is an $h$-Seifert surface for $\hat\Delta$, that is, an oriented 
properly embedded smooth $3$-manifold in $S^4\but\hat\Delta$, whose class in 
$H_3^\infty(S^4\but\hat\Delta)\simeq H_3(S^4,\hat\Delta)$ corresponds to 
$[\hat\Delta]\in H_2(\hat\Delta)$ under the isomorphism $\partial_*\:H_3(S^4,\hat\Delta)\to H_2(\hat\Delta)$.
Moreover, $\hat W$ meets $S^3\x\partial I$ in $\Sigma_0\cup\Sigma_1$ and is disjoint from $\Delta'$.
Thus $\Delta$ {\it $h$-bounds} the bordism $W:=\hat W\cap S^3\x I$ between $\Sigma_0$ and $\Sigma_1$, 
which is disjoint from $\Delta'$.

Similarly, $\Delta'$ $h$-bounds a bordism $W'$ between $\Sigma'_0$ and $\Sigma'_1$, disjoint from $\Delta$.
We may assume that $W$ and $W'$ meet transversely along a surface $\Phi$ with boundary $F_0\cup -F_1$.
Since $W$ and $W'$ are oriented, they are framed, and hence so is $\Phi$.
Thus $\Phi$ is a framed bordism between $F_0$ and $F_1$.
So we obtain $\beta(L_0)=\beta(L_1)$.
\end{proof}

\begin{proof}[(b)] Let us observe that the proof of (a) still works if the hypothesis that $\Delta$ is 
homeomorphic to $S^1\x I$ extending the homeomorphism $K_0\cup K_1\to S^1\x\partial I$ is replaced by 
the following weaker hypothesis:
\begin{enumerate} 
\item $\Delta$ is an oriented surface with boundary $K_0\cup-K_1$, and
\item $\Delta'$ $h$-bounds a bordism $W'$ between $\Sigma_0'$ and $\Sigma_1'$, disjoint from $\Delta$.
\end{enumerate}
The hypothesis that
\begin{enumerate}
\item[(3)] $\Delta'$ is homeomorphic to $S^1\x I$ extending the homeomorphism $K'_0\cup K'_1\to S^1\x\partial I$
\end{enumerate}
is not meant to be modified.
Thus the proof of (a) works to show that $\beta$ is invariant not only under $I$-equivalence, but also under 
{\it weak $I$-equivalence}, which is defined by the conditions (1)--(3), with the $h$-Seifert surfaces 
$\Sigma_0'$ and $\Sigma_1'$ bound by the existential quantifier.
Specifically, a proof that $\beta$ is invariant under weak $I$-equivalence is obtained from the proof of (a)
by replacing ``$I$-equivalence'' with ``weak $I$-equivalence'' and ``Similarly'' with ``By hypothesis''.

To complete the proof of (b), it suffices to show that if $L_0$ is weakly $I$-equivalent to $L_1$, then
$\partial_1 L_0$ is weakly $I$-equivalent to $\partial_1 L_1$.
To prove this, we will build on the proof of (a), modified as stated (in addition, we must now assume that 
$F_0$ and $F_1$ are connected).
Let $\Phi^{++}$ be a pushoff of $\Phi$ along the sum of the two vectors of the framing.
Then $\Phi^{++}$ is disjoint from $W'$, and hence $(\Phi^{++},\Delta')$ is a weak $I$-equivalence between
$\partial_1L_0=(F_0^{++},K_0')$ and $\partial_1L_1=(F_1^{++},K_1')$.
\end{proof}

Next we note that the extensions of Cochran's derived invariants to wild links with vanishing linking number
defined above and in \cite{Me} coincide:

\begin{theorem} \label{stability}
For every $2$-component link $L$ with $\lk(L)=0$, each $\beta_i(L)=\beta_i^\mho(L)$.
\end{theorem}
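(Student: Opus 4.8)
The plan is to reduce to the continuity property of $\beta_i^\mho$ established in \cite{Me} and then to check that the construction of \S\ref{Cochran} returns the same value on $L$ and on nearby PL links. First I would settle the PL case: if $L'$ is a PL link with $\lk(L')=0$, then a genuine Seifert surface for a component of $L'$ is in particular an $h$-Seifert surface, so Cochran's original construction is a special case of the one in \S\ref{Cochran}, and by Theorem \ref{main} the value does not depend on the $h$-Seifert surfaces used; hence $\beta_i(L')$ coincides with Cochran's derived invariant, which is $\beta_i^\mho(L')$ because $\beta_i^\mho$ was defined in \cite{Me} as an extension of it. Now let $L=(K,K')$ be arbitrary. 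By \cite{Me} there is an $\eps>0$ such that $\beta_i^\mho$ is constant on the set of PL links that are $\eps$-close to $L$, and $\beta_i^\mho(L)$ equals this constant. So it suffices to produce, for each $i$ and each $\eps>0$, a PL link $L_1$ within $\eps$ of $L$ with $\beta_i(L_1)=\beta_i(L)$, both sides computed via $h$-Seifert surfaces.

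To build $L_1$, fix $i$ and run the iterated construction for $\beta_i(L)=\beta(\partial_1^{\,i-1}L)$, localizing the wildness. Inductively choose an $h$-Seifert surface $\Sigma_1$ for $K$ disjoint from $K'$; for $j=1,\dots,i$ an $h$-Seifert surface $\Sigma_j'$ for $K'$ disjoint from $K$ and, when $j\ge2$, from $F_{j-1}^{++}$ (possible by the argument of Lemma \ref{Seifert}, since $\lk(K,K')=0$ and, as noted in \S\ref{Cochran}, $F_{j-1}^{++}$ is disjoint from $\Sigma_{j-1}'$); and for $j\ge2$ an $h$-Seifert surface $\Sigma_j$ for the tame curve $F_{j-1}^{++}$ disjoint from $K'$; put $F_j=\Sigma_j\cap\Sigma_j'$ (taken transverse). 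Then outside any neighborhood of $K$ the surface $\Sigma_1$ is compact with boundary (it may be wild near $K$), likewise the $\Sigma_j'$ near $K'$, while $\Sigma_2,\dots,\Sigma_i$ may be taken compact and disjoint from $K'$ and all the $F_j$, $F_j^{++}$ are compact submanifolds disjoint from $K\cup K'$. Pick PL knots $K_1$, $K_1'$ that are $\eps$-close to $K$, $K'$, and let $N\supset K$, $N'\supset K'$ be thin solid-torus neighborhoods of $K_1$, $K_1'$, disjoint from each other, small enough that $N$ misses $\Sigma_1'$ and $N'$ misses every $\Sigma_j$ and both miss every $F_j$ and $F_j^{++}$, and perturbed so that $\partial N$, $\partial N'$ meet $\Sigma_1$, the $\Sigma_j'$ transversely. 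An $h$-Seifert surface for $K$ deposits a single longitude on $\partial N$ (this is where the duality $H_2^\infty(S^3\but K)\cong H_2(S^3,K)$ is used), so the curves $\Sigma_1\cap\partial N$ and the core $K_1$ together bound a compact surface $P\subset N$; likewise $\Sigma_j'\cap\partial N'$ and $K_1'$ bound $P_j'\subset N'$. Let $\tilde\Sigma_1=(\Sigma_1\but\operatorname{int}N)\cup P$ and $\tilde\Sigma_j'=(\Sigma_j'\but\operatorname{int}N')\cup P_j'$, both smoothed: honest Seifert surfaces for $K_1$, $K_1'$. Keeping $\Sigma_j$ ($j\ge2$) unchanged and setting $L_1=(K_1,K_1')$, a PL link with $\lk(L_1)=0$ within $\eps$ of $L$.

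Because $N$ and $N'$ are disjoint and $\tilde\Sigma_1$, $\tilde\Sigma_j'$ agree with $\Sigma_1$, $\Sigma_j'$ outside the tori, each intersection $\tilde\Sigma_j\cap\tilde\Sigma_j'$ — where $\tilde\Sigma_j:=\Sigma_j$ for $j\ge2$ — splits as $(\Sigma_j\cap\Sigma_j')\but\operatorname{int}(N\cup N')=F_j$ together with pieces lying in $N$ or $N'$ that are empty by the choice of the tori; and near $F_j$ the modified surfaces coincide with the old ones, so the induced framings agree. Hence $F_j$ is unchanged as a framed $1$-manifold, its pushoff $F_j^{++}$ is unchanged, and $\partial_1^{\,j}L_1=(F_j^{++},K_1')$ is the same iterate as for $L$ but with $K'$ replaced by $K_1'$. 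As the $L_1$-construction uses genuine Seifert surfaces, the resulting $\beta_i(L_1)$ is Cochran's invariant, equal to $\lk(F_i,F_i^{++})=\beta_i(L)$ by Theorem \ref{main}. With the PL case this yields $\beta_i^\mho(L_1)=\beta_i(L_1)=\beta_i(L)$, and since $L_1$ is $\eps$-close to $L$, finally $\beta_i^\mho(L)=\beta_i(L)$.

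The step I expect to be the main obstacle is the homological claim that the truncations $\Sigma_1\cap N$ and $\Sigma_j'\cap N'$ of the wild $h$-Seifert surfaces deposit exactly one longitude on $\partial N$, $\partial N'$ — it is here that the very definition of an $h$-Seifert surface through the Alexander duality $H_2^\infty(S^3\but K)\cong H_2(S^3,K)$, together with the vanishing of the linking numbers, is essential — so that the caps $P$, $P_j'$ with a single extra boundary circle of multiplicity one exist and the rebuilt surfaces $\tilde\Sigma_1$, $\tilde\Sigma_j'$ still represent the correct classes in $H_2(S^3,K_1)$, $H_2(S^3,K_1')$. A secondary point, already used above, is that all the $h$-Seifert surfaces in the iteration may be chosen simultaneously disjoint from $K$ and from the earlier pushoffs $F_{j-1}^{++}$.
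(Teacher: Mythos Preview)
Your argument is correct and follows the same strategy as the paper's: reduce to the stability of $\beta_i$ under $C^0$-small perturbation, then show that the $h$-Seifert surfaces for a nearby link may be chosen to coincide with those for $L$ outside small neighborhoods $N$, $N'$ of $K$, $K'$, so that the curves $F_j$ and hence $\beta_i$ are unchanged. The paper's execution is shorter in two respects: it reuses a single $\Sigma'$ for $K'$ at every stage of the iteration (each $F_j^{++}$ is already disjoint from it), and instead of your explicit capping-off it simply modifies the classifying map $f_K\:(S^3\but K,\,K')\to(S^1,\{1\})$ of Lemma~\ref{Seifert} inside $N\cup N'$ --- thereby sidestepping entirely the longitude computation you flag as the main obstacle.
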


Theorem \ref{th4} follows from Theorems \ref{main} and \ref{stability}.

\begin{proof} We need to show that each $\beta_i$, defined as above, satisfies the following property:
for each two-component link $L$ with $\lk(L)=0$ there exists an $\eps>0$ such that every PL link $\tilde L$ 
that is $\eps$-close to $L$ satisfies $\beta_i(\tilde L)=\beta_i(L)$.
In fact, we will prove this without assuming $\tilde L$ to be PL.

Let $L=(K,K')$, let $\Sigma'$ be as above and let $\Sigma_1,\dots,\Sigma_i$ and $F_1,\dots,F_i$ be 
the consecutive instances of $\Sigma$ and $F$ that are produced by the definition of $\beta_i$.
Let $N$ be a closed polyhedral neighborhood of $K$, disjoint from $\Sigma'\cup\Sigma_2\dots\cup\Sigma_i$
(for instance, the union of all simplexes of a sufficiently fine triangulation of $S^3$ that meet $K$).
Similarly, let $N'$ be a closed polyhedral neighborhood of $K'$, disjoint from $\Sigma_1\cup\dots\cup\Sigma_i$.
If $\eps>0$ is sufficiently small, $\tilde L=(\tilde K,\tilde K')$ lies in the interior of $N\cup N'$.
Then it is easy to see from the proof of Lemma \ref{Seifert} that the $h$-Seifert surfaces $\tilde\Sigma_1$ 
and $\tilde\Sigma'$ for $\tilde K$ and $\tilde K'$ in $S^3\but\tilde K'$ and $S^3\but\tilde K$ can be chosen 
so as to differ from $\Sigma_1$ and $\Sigma'$ only within $N\cup N'$.
Then clearly $\beta_i(\tilde L)=\beta_i(L)$.
\end{proof}

\begin{remark} \label{properties}
Let $L=(K,K')$ be a link in $S^3$, and let $\Sigma$, $\Sigma'$, $F$ and $F^{++}$ be as above.

(a) Clearly, $\beta(L)=\lk(-F,-F^{++})=\beta(L')$, where $L'=(K',K)$ is obtained by
formally exchanging the components of $L$.

(b) Clearly, $\beta(L)=\lk(F,F^{0+})$, where $F^{0+}$ is the pushoff of $F$ within $\Sigma$, along the framing 
of $\Sigma'$.
This often facilitates computation.

(c) Clearly, $\beta(L)=\lk(F,F^{+-})=\lk(-F,-F^{+-})=\beta(K,-K')$, where $F^{+-}$ is the pushoff of $F$ 
along the difference of the two vectors of the framing.

(d) Each $\beta_i(L)=\beta_i(-K,K')=\beta_i(K,-K')$ since this holds for $i=1$ and $(F^{++},K')$ 
is equivalent to $(F^{+-},K')$ and $(F^{-+},K')$.
Thus to compute $\beta_i(L)$ we may forget the orientations of the components of $L$ and orient the surfaces
randomly.

(e) Clearly, $\beta_n(K',K)=\beta_n'(K,K')$, where $\beta_n'(L)=\beta(\partial_2\dots\partial_2 L)$
using $\partial_2 L:=(K,F^{++})$.
However, the ``mixed derivatives'' are not well-defined in general.
\end{remark}

\begin{remark} \label{mu}
For PL links with vanishing linking number $\beta=\bar\mu(1122)$ \cite{Co}*{\S9}, \cite{Co2}*{\S4}, \cite{SB}
and more generally the residue class $\bar\beta_n$ of $\beta_n$ modulo $\gcd(\beta_1,\dots,\beta_{n-1})$
is the same as $\bar\mu(\underbrace{1\ldots1}_{2n}22)$ \cite{Co3}*{Theorem 6.10}, \cite{Ste}
(see also \cite{Mi}*{\S4}).
For wild links the equation $\bar\beta_n=\bar\mu(\underbrace{1\ldots1}_{2n}22)$ remains valid, since each 
$\bar\beta_i$ is stable under small perturbations (by Theorem \ref{stability}) and so is each 
$\bar\mu$-invariant (see \cite{Mi}).

Other Milnor's $\bar\mu$-invariants (or rather Massey products in the cohomology of $S^3\but L$, which are 
well-known to contain the same information as $\bar\mu$-invariants) admit geometric descriptions in the spirit 
of that of the $\beta_i$ \cite{Co3}*{Proposition 6.5}. 
Here are three simplest examples of such constructions; as presented here, they apply to wild links.

\begin{itemize}
\item Let $L=(K,K')$ be such that $\lk(L)=\beta(L)=0$.
Let $\Sigma$, $\Sigma'$, $F$ and $F^{++}$ be as above.
Then $\lk(F^{++},F)=0$ due to $\beta(L)=0$ and $\lk(F^{++},K)=\lk(F^{++},K')=0$ since $F^{++}$ is disjoint
from both $\Sigma$ and $\Sigma'$.
Hence $F^{++}$ bounds a Seifert surface $\Sigma''$ disjoint from $K\cup F\cup K'$.
Let $E=\Sigma''\cap\Sigma$ and $E'=\Sigma''\cap\Sigma'$ (transverse intersections), and let $\delta(L)=\lk(E,E')$.
For PL links with true Seifert surfaces $\delta=\bar\mu(111222)$ \cite{Co3}*{Example 6.7}.
For wild links, the argument of Theorem \ref{stability} above shows that $\delta$ is stable under small 
perturbations, and hence the equation $\delta=\bar\mu(111222)$ remains valid.
In particular, $\delta$ is well-defined (under the hypothesis $\lk(L)=\beta(L)=0$) and is an $I$-equivalence 
invariant \cite{Sta}, \cite{Ca}, \cite{Co0}.

\item Let $L=(K,K',K'')$ have trivial pairwise linking numbers.
Then $K$ bounds an $h$-Seifert surface in $S^3\but (K'\cup K'')$ and $K'$ bounds an $h$-Seifert surface
in $S^3\but (K\cup K'')$. 
Let $F=\Sigma\cap\Sigma'$ (transverse intersection), and let $\beta(L)=\lk(F,K'')$.
If $\Sigma''$ is an $h$-Seifert surface for $K''$ in $S^3\but (K\cup K')$, clearly $\beta(L)$ equals 
the algebraic number of transverse intersection points $\#\Sigma\cap\Sigma'\cap\Sigma''$.%
\footnote{See \cite{M3} for a discussion of related geometric constructions of $\mu(123)$ by P. Akhmetiev 
(based on neatly chosen null-homotopies in place of Seifert surfaces) and U. Koschorke (rediscovered 
by Melvin--Morton; based on arbitrary null-homotopies, but with correction terms) and their algebraic
significance.}
For PL links with true Seifert surfaces it is well-known that $\beta=\mu(123)$ \cite{Co2}; wild links
can be treated as above.

\item Let $L=(K,K',K'')$ have trivial pairwise linking numbers and $\mu(123)$.
Then there exist $\Sigma$, $\Sigma'$ and $\Sigma''$ as above with $\Sigma\cap\Sigma'\cap\Sigma''=\emptyset$.
(First make $\Sigma\cap\Sigma'$ connected, as explained in \S\ref{Cochran}.
Then some pair of triple points with opposite signs are connected by an arc in $\Sigma\cap\Sigma'$ containing 
no other triple points.
So this pair can be canceled by attaching a tube to $\Sigma''$ along this arc.
This process can be repeated until all triple points are eliminated.)
Let $F=\Sigma\cap\Sigma'$ and $F'=\Sigma\cap\Sigma''$ (transverse intersection), and let $\gamma(L)=\lk(F,F')$.
For PL links with true Seifert surfaces $\gamma=-\mu(1123)$ according to \cite{Co3}*{Appendix B}; wild links
can be treated as above.
\end{itemize}

However, apart from the $\beta_i$ (and the $\beta'_i$, see Remark \ref{properties}(e)), no other infinite
series of simulateously defined integer invariants seems to have been constructed in this fashion, although 
there are some candidates \cite{Li1}*{Remark 5.3} (see also \cite{TY}).
\end{remark}

\section{Realization} \label{realization}

\begin{figure}[h]
\includegraphics{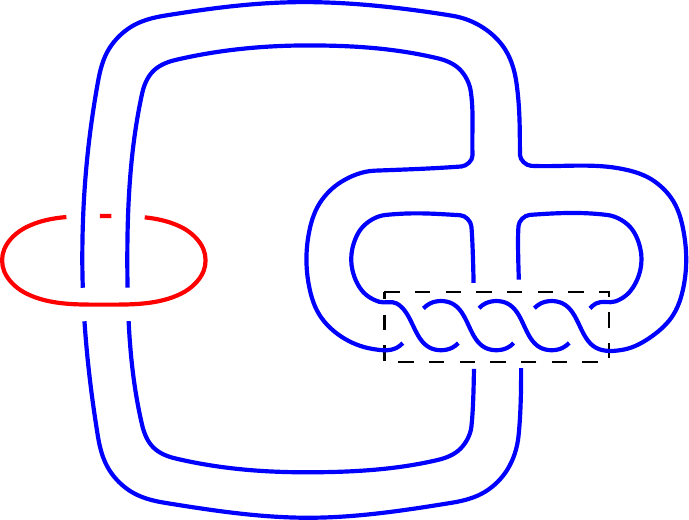}
\caption{The link $W_n$ for $n=2$.}
\label{wh2a}
\end{figure}

\begin{example} \label{Wh-example}
Let us compute $\beta(W_n)$, where the link $W_2$ is shown in Figure \ref{wh2a} and $W_n$ is obtained 
by replacing the two positive full twists in the dashed rectangle with $n$ positive full twists ($n\in\Z$).
In fact, it is easy to see that $W_0$ is the unlink and $W_1$, $W_{-1}$ are the left-handed and right-handed 
versions of the Whitehead link.

\begin{figure}[h]
\includegraphics{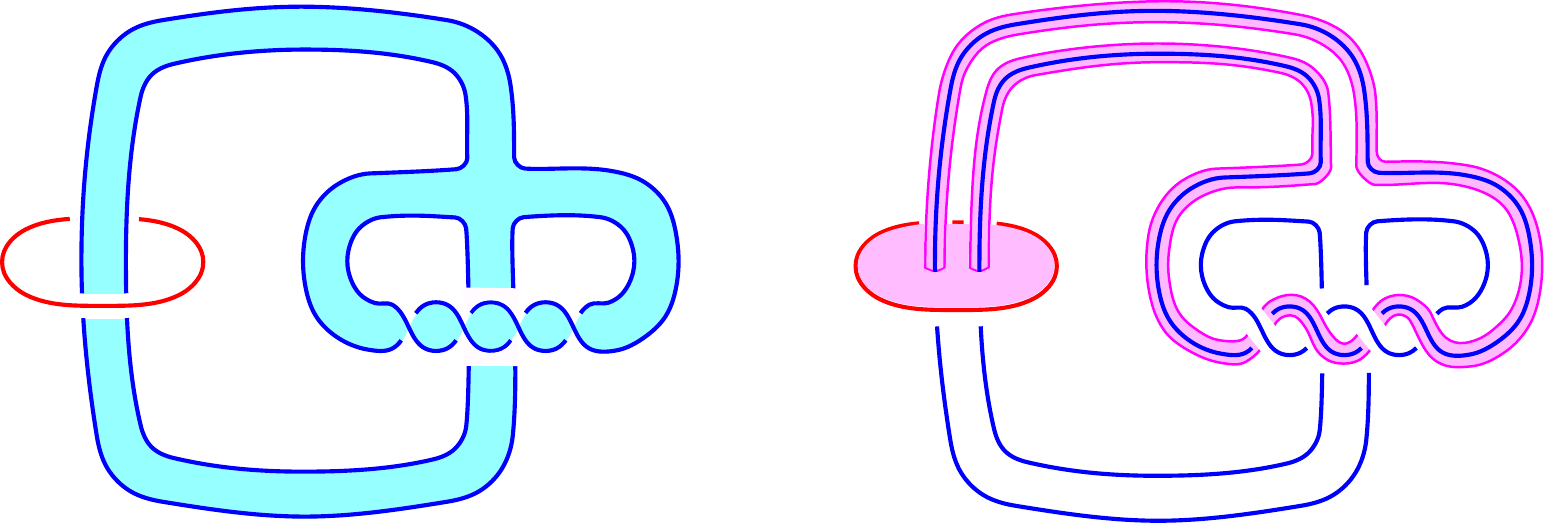}
\caption{$\Sigma$ and $\Sigma'$.}
\label{wh2b}
\end{figure}

Figure \ref{wh2b} shows a choice of the $h$-Seifert surfaces $\Sigma$ and $\Sigma'$ (in this case, they are
Seifert surfaces).
Figure \ref{wh2c} shows their intersection $F$ and its outward pushoff $F^{0+}$ on $\Sigma$ along a framing
of $\Sigma'$.
We have $\beta(W_2)=\lk(F,F^{0+})=2$ and similarly $\beta(W_n)=n$.
\end{example}

\begin{figure}[h]
\includegraphics{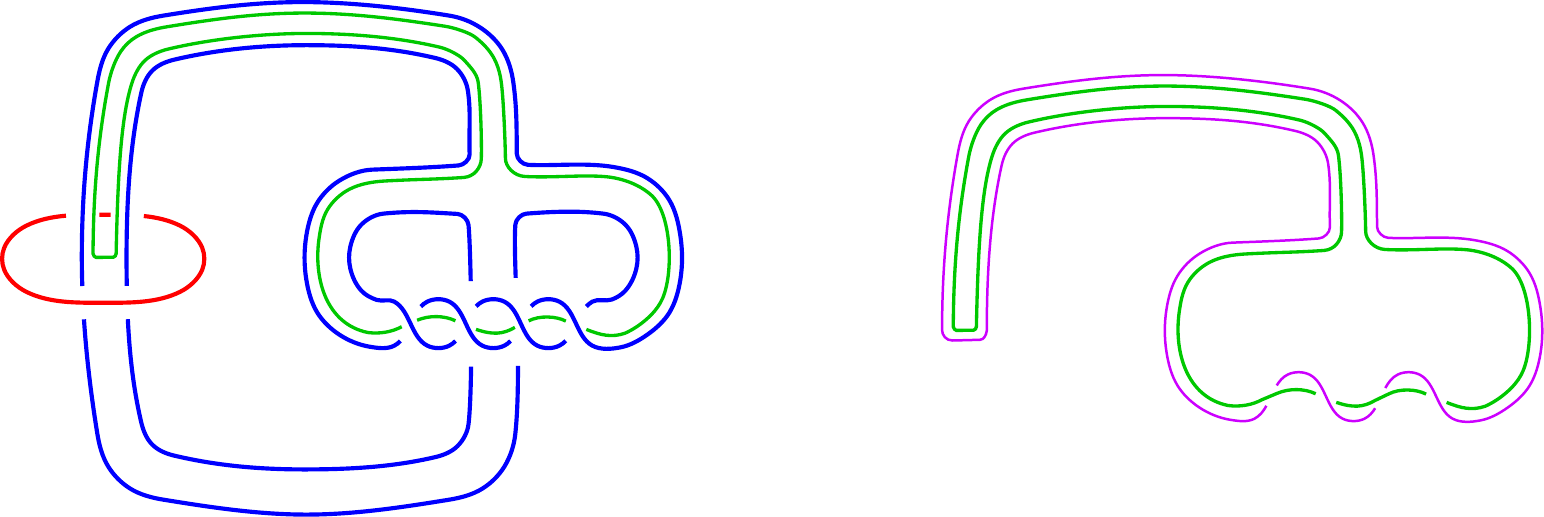}
\caption{$(W_2,F)$ and $(F,F^{0+})$.}
\label{wh2c}
\end{figure}

\begin{figure}[h]
\includegraphics{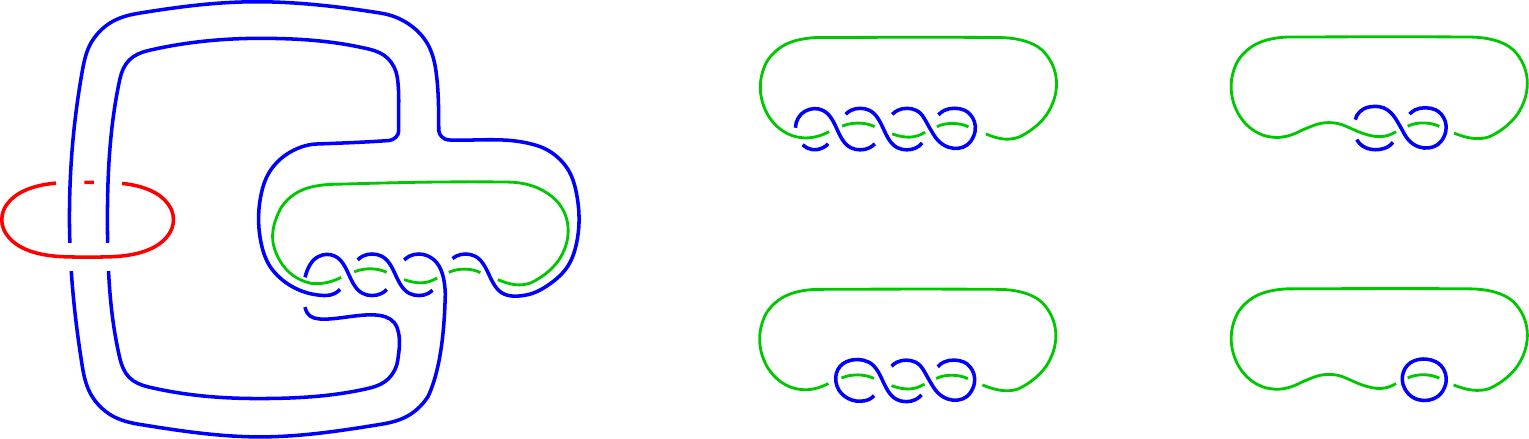}
\caption{$(W_2,F)$ and $\partial_1(W_2)=(F,K')$.}
\label{wh2d}
\end{figure}

\begin{example} \label{Wh-example2}
Let us compute $\beta_i(W_n)$ and $\beta_i(W_n')$, where $W_n'=(K',K)$ is the formal transposition of 
$W_n=(K,K')$.
Obviously, $(K,F)$ is a trivial link, and it is clear from Figure \ref{wh2d} that $(F,K')$
is also a trivial link.
Thus $\beta_i(W_n)=\beta_i(W_n')=0$ for $n>1$.
\end{example}

\begin{figure}[h]
\includegraphics{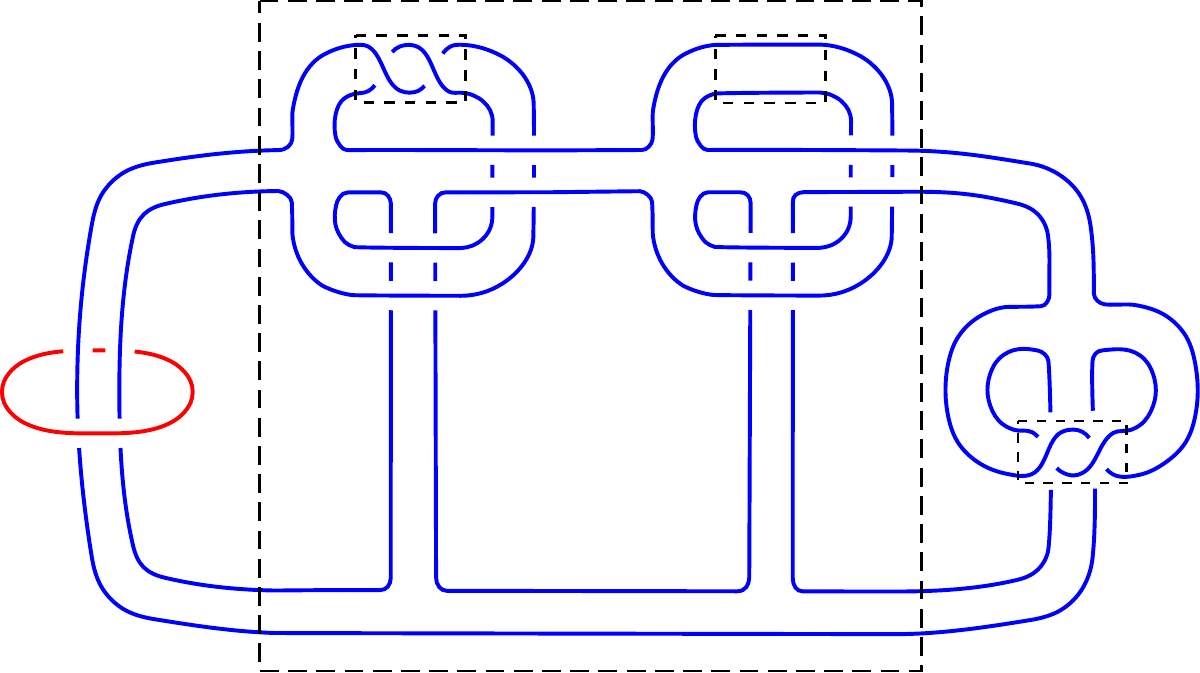}
\caption{The link $W_{n_1,\dots,n_m}$ for $m=3$ and $(n_1,n_2,n_3)=(1,0,-1)$.}
\label{w10-1a}
\end{figure}

\begin{example} \label{Milnor-example}
Let us compute $\beta_i(W_{n_1,\dots,n_m})$, where the link $W_{1,0,-1}$ is shown in Figure \ref{w10-1a} and 
$W_{n_1,\dots,n_m}$ is obtained by replacing the negative full twist in the rightmost dashed rectangle 
with $n_m$ positive full twists, and the two similar pictures in the large dashed rectangle with $m-1$ 
such pictures, with $n_1,\dots,n_{m-1}$ full twists in the small rectangles.

Thus $W_n$ is same as before.
The link $W_{0,\dots,0,1}$ is equivalent to Li's link \cite{Li}*{Figure 1}, which in turn is a twisted version 
of Milnor's link \cite{Mi}*{Figure 1}. 
The link $W_{0,\dots,0,-1}$ is equivalent to the mirror image of a link of Kojima and Yamasaki 
\cite{KY}*{Figure 9}.

\begin{figure}[h]
\includegraphics{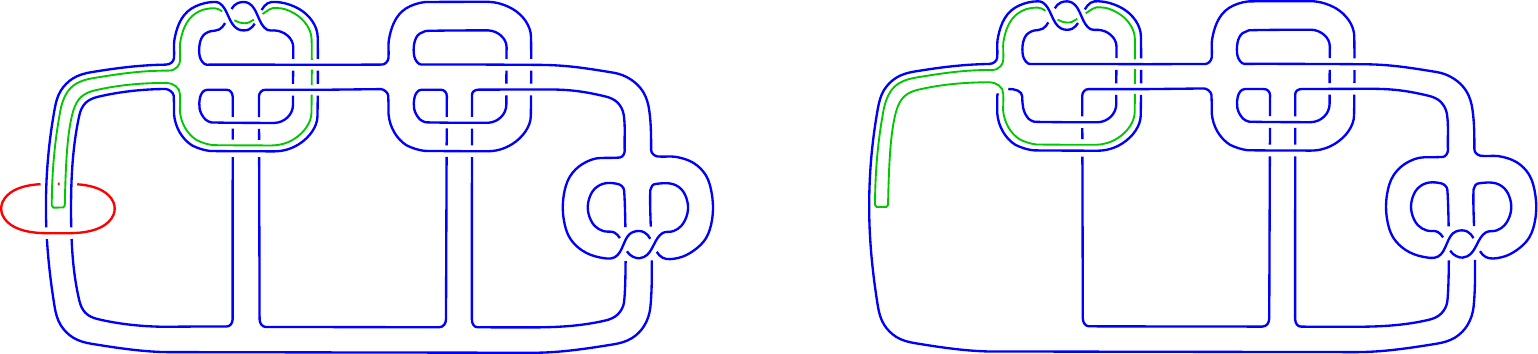}
\caption{$(K,F,K')$ and $\partial_1(W_{1,0,-1})=(F,K')$.}
\label{w10-1b}
\end{figure}

By using Seifert surfaces similar to those in Figure \ref{wh2b}, we obtain their intersection $F$ as shown
in Figure \ref{w10-1b} (left).
Clearly, $\beta(W_{1,0,-1})=\lk(F,F^{0+})=1$, and similarly $\beta_1(W_{n_1,\dots,n_m})=n_1$.

On the other hand, it is clear from Figure \ref{w10-1b} (right) and Figure \ref{w10-1c}
that $(F,K')$ is nothing but $W_{1,-1}$.
Similarly, $\partial_1(W_{n_1,\dots,n_m})=W_{n_2,\dots,n_m}$.

\begin{figure}[h]
\includegraphics{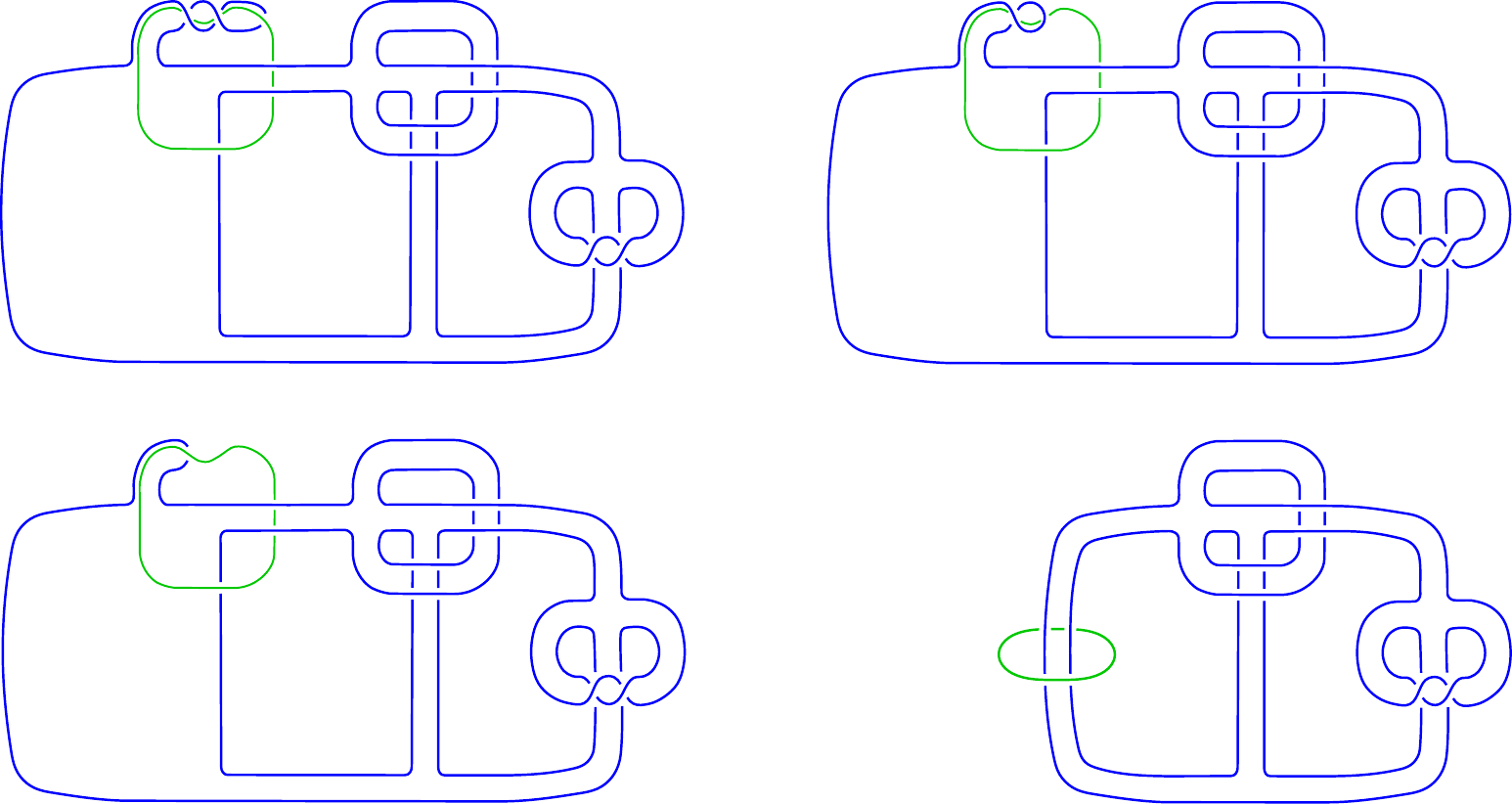}
\caption{$\partial_1(W_{1,0,-1})$ is equivalent to $W_{0,-1}$.}
\label{w10-1c}
\end{figure}

Hence $\beta_i(W_{n_1,\dots,n_m})=\beta_{i-1}(W_{n_2,\dots,n_m})$ as long as $i>1$ and $m>1$.
On the other hand, by the previous examples $\beta_1(W_{n_m})=n_m$ and $\beta_i(W_{n_m})=0$ for $i>1$.
It follows that 
\[\beta_i(W_{n_1,\dots,n_m})=
\begin{cases}
n_i\text{ for }i\le m,\\
0\text{ for }i>m.
\end{cases}\]

Finally, if $W'_{n_1,\dots,n_m}=(K',K)$ is the formal transposition of $W_{n_1,\dots,n_m}=(K,K')$, 
then each $\beta_i(W'_{n_1,\dots,n_m})=0$ unless $i=m=1$, using that $(K,F)$ is a trivial link.
\end{example}

\begin{figure}[h]
\includegraphics{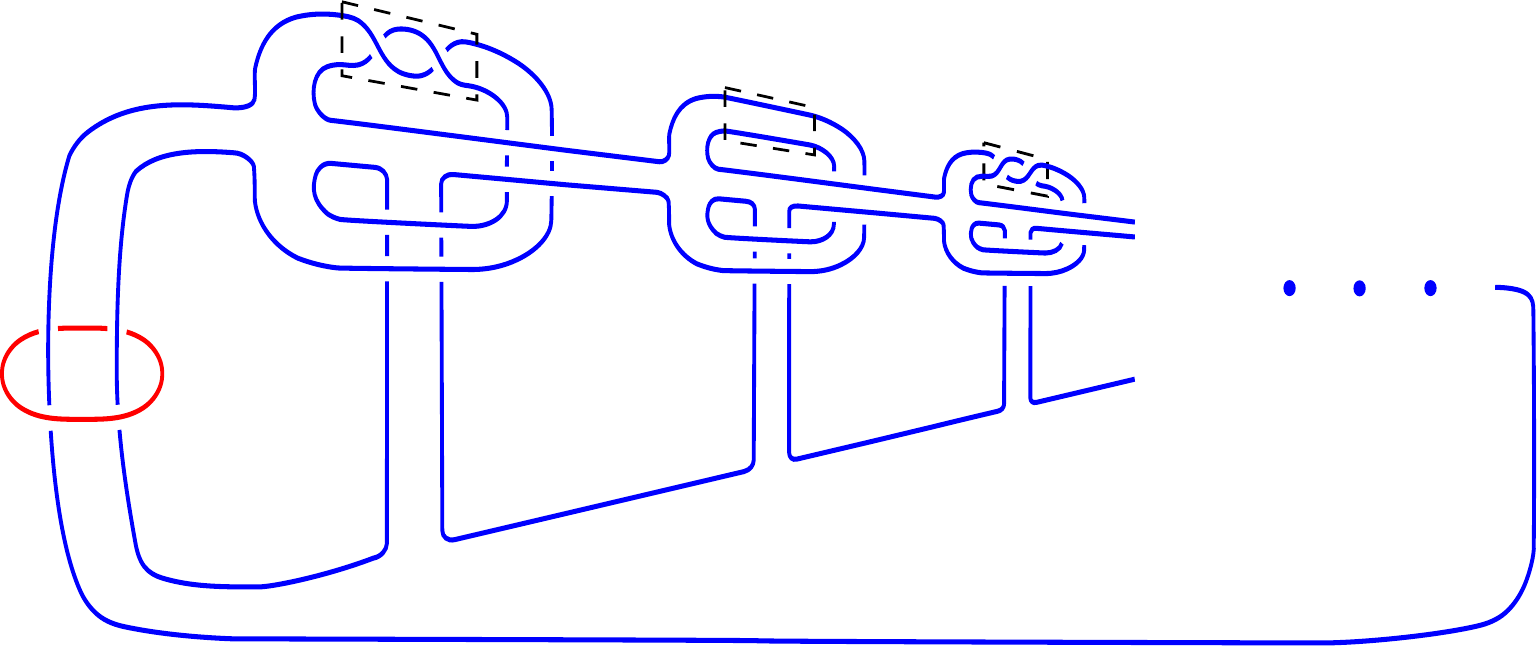}
\caption{The link $W_{n_1,n_2,n_3,\dots}$ where $(n_1,n_2,n_3)=(1,0,-1)$.}
\label{w-infty}
\end{figure}

\begin{example} \label{w-example}
For each sequence of integers $n_1,n_2,n_3,\dots$ let $W_{n_1,n_2,n_3,\dots}$ be the link 
shown in Figure \ref{w-infty} (with $n_i$ positive full twists in the $i$th dashed window).
Similarly to the previous example,
\[\beta_i(W_{n_1,n_2,n_3,\dots})=n_i.\]
\end{example}

Writing $C_L(x)=\sum_{i=1}^\infty\beta_i(L)x^i$, we have proved

\begin{theorem} \label{surjectivity}
For every power series $P\in\Z[[x]]$ there exists a link $L$ such that $C_L=P$.
\end{theorem}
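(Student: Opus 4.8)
\section*{Proof proposal for Theorem \ref{surjectivity}}

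The plan is to write down the required link explicitly, as one of the wild links of Example \ref{w-example}. Given $P=\sum_{i=1}^\infty p_i x^i\in\Z[[x]]$, set $L:=W_{p_1,p_2,p_3,\dots}$ with the windows carrying $p_1,p_2,p_3,\dots$ positive full twists. This $L$ is a genuine (wild) link in $S^3$: it is PL away from the single point to which the shrinking sequence of Whitehead-type patterns converges, and since each of its PL truncations has vanishing linking number, so does $L$. By Theorem \ref{main}(b), together with the observation made in \S\ref{Cochran} that the construction of the $\beta_i$ by means of $h$-Seifert surfaces goes through verbatim for wild links, every $\beta_i(L)$ is a well-defined integer; hence $C_L(x)=\sum_{i=1}^\infty\beta_i(L)\,x^i$ is a well-defined element of $\Z[[x]]$. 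It remains only to verify that $\beta_i(L)=p_i$ for every $i$ — which is exactly what Example \ref{w-example} asserts — and then $C_L=P$, as desired.

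To carry out that verification I would argue by induction on $i$, reusing the computations of Examples \ref{Wh-example} and \ref{Milnor-example}. The base case is $\beta_1(W_{n_1,n_2,n_3,\dots})=n_1$: choosing $h$-Seifert surfaces $\Sigma$, $\Sigma'$ modelled near the first window on those of Figure \ref{wh2b} (the remaining parts of the surfaces being pushed out toward the wild point), the transverse intersection $F$ and its pushoff $F^{0+}$ on $\Sigma$ are literally the curves of Figure \ref{wh2c}, so $\beta=\lk(F,F^{0+})=n_1$. The inductive step rests on the recursion $\partial_1(W_{n_1,n_2,n_3,\dots})=W_{n_2,n_3,n_4,\dots}$: with the same surfaces, after making $F$ connected as in \S\ref{Cochran}, the link $(F^{++},K')$ is, by the local picture of Figures \ref{w10-1b} and \ref{w10-1c} applied at the first window, the shifted infinite link, exactly as in the finite case. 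Granting this, $\beta_i(L)=\beta_{i-1}(\partial_1 L)=\beta_{i-1}(W_{p_2,p_3,\dots})=p_i$ by the inductive hypothesis applied to the shifted sequence.

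The only point that genuinely needs care — and the place I expect the one mild obstacle — is the passage from the finite PL links $W_{n_1,\dots,n_m}$, for which the pictures are unambiguous, to the genuinely wild $W_{n_1,n_2,\dots}$: one must know that ``$\beta_i$'' is defined for the latter and is still computed by the same local data. This is dispatched cleanly by Theorem \ref{stability}, which bypasses any need to choose $h$-Seifert surfaces on a wild link. Indeed, fix $i$ and pick $m>i$; the wild link $W_{p_1,p_2,p_3,\dots}$ is $\eps$-close, for arbitrarily small $\eps$, to the PL link $W_{p_1,\dots,p_m}$ obtained by straightening everything inside the small ball containing the windows beyond the $m$-th. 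Since $\beta_i(W_{p_1,\dots,p_m})=p_i$ by Example \ref{Milnor-example}, the stability property established in Theorem \ref{stability} (whose proof, as noted there, does not require the nearby link to be PL) forces $\beta_i(W_{p_1,p_2,p_3,\dots})=p_i$; letting $i$ range over all positive integers completes the proof. Alternatively, one may keep the direct route of the second paragraph and simply remark that the map $f_K\colon(S^3\setminus K,\,K')\to(S^1,\{1\})$ of Lemma \ref{Seifert} can be chosen to restrict to the ``obvious'' map on each compact stage of the construction, so that $\Sigma$ agrees there with the surface of Figure \ref{wh2b}.
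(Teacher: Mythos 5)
Your construction is exactly the paper's: the proof of Theorem \ref{surjectivity} in the paper is simply Example \ref{w-example}, i.e.\ the link $W_{p_1,p_2,p_3,\dots}$ with $\beta_i(W_{p_1,p_2,\dots})=p_i$, verified ``similarly to'' the finite computation of Example \ref{Milnor-example} via the recursion $\partial_1(W_{n_1,n_2,\dots})=W_{n_2,n_3,\dots}$ carried out directly on the wild link with $h$-Seifert surfaces. Your second paragraph is precisely that argument, so in its primary form your proposal matches the paper. Where you diverge is in the verification you actually prefer: transferring the finite computation $\beta_i(W_{p_1,\dots,p_m})=p_i$ to the wild link by the perturbation-stability established in the proof of Theorem \ref{stability}. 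That route is legitimate within the paper (the proof of Theorem \ref{stability} explicitly does not require the nearby link to be PL, and the paper itself uses the same stability trick in Remark \ref{mu} to extend $\bar\mu$-identities to wild links), but note that the paper deliberately sets up the $h$-Seifert machinery so as to compute on wild links \emph{without} PL approximation, and that the proof of Theorem \ref{stability} itself rests on choosing the $h$-Seifert surfaces of the nearby link compatibly --- so your shortcut is not more elementary, just a repackaging. Two small points to tighten: the quantifiers in your stability step are reversed (for fixed $m$ the truncation sits at a fixed distance from $L$; you must first take the $\eps$ furnished by stability for the given $L$ and $i$, and only then choose $m\ge i$ large enough that the truncation lies within $\eps$), and you should say a word about why the link obtained by straightening the tail inside the small ball is indeed (equivalent to) $W_{p_1,\dots,p_m}$, or at least has $\beta_i=p_i$ --- this is the content of the paper's remark that $W_{n_1,\dots,n_m,0,0,\dots}$ is isotopic to $W_{n_1,\dots,n_m}$.
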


Theorems \ref{main} and \ref{surjectivity} imply Theorem \ref{th2}.
In fact, the links $(W_{n_1,n_2,n_3,\dots})$ with each $n_i\in\{0,1\}$ already form an uncountable collection
of pairwise non-$I$-equivalent links.

\begin{remark}
It is easy see that for each finite sequence of integers $n_1,\dots,n_m$, the link $W_{n_1,\dots,n_m,0,0,\dots}$
is isotopic to $W_{n_1,\dots,n_m}$ (this is similar to \cite{Mi}*{Figures 3, 4, 5}).

The link $W_{0,0,\dots}$, which is isotopic to the trivial link, is the mirror image of a link of
Kojima and Yamasaki \cite{KY}*{Figure 7}.
If $W_{0,0,\dots}=(K,K')$, it is not hard to check that $\pi_1(S^3\but K')$ is isomorphic to
$\left<x_1,x_2,\ldots\mid x_ix_{i-1}x_i^{-1}=x_{i+1}x_ix_{i+1}^{-1}\right>$ (compare \cite{Fox}, 
\cite{Mi}*{p.\ 304}) and the isomorphism sends $[K]$ to $x_1^{-1}x_2x_1x_2$.
This group surjects onto $S_3$ via $x_{2i}\mapsto (12)$ and $x_{2i+1}\mapsto (13)$ (in fact, $K'$ is
tricolorable).
In particular, $K$ is not null-homotopic in $S^3\but K'$.
\end{remark}

\begin{figure}[h]
\includegraphics{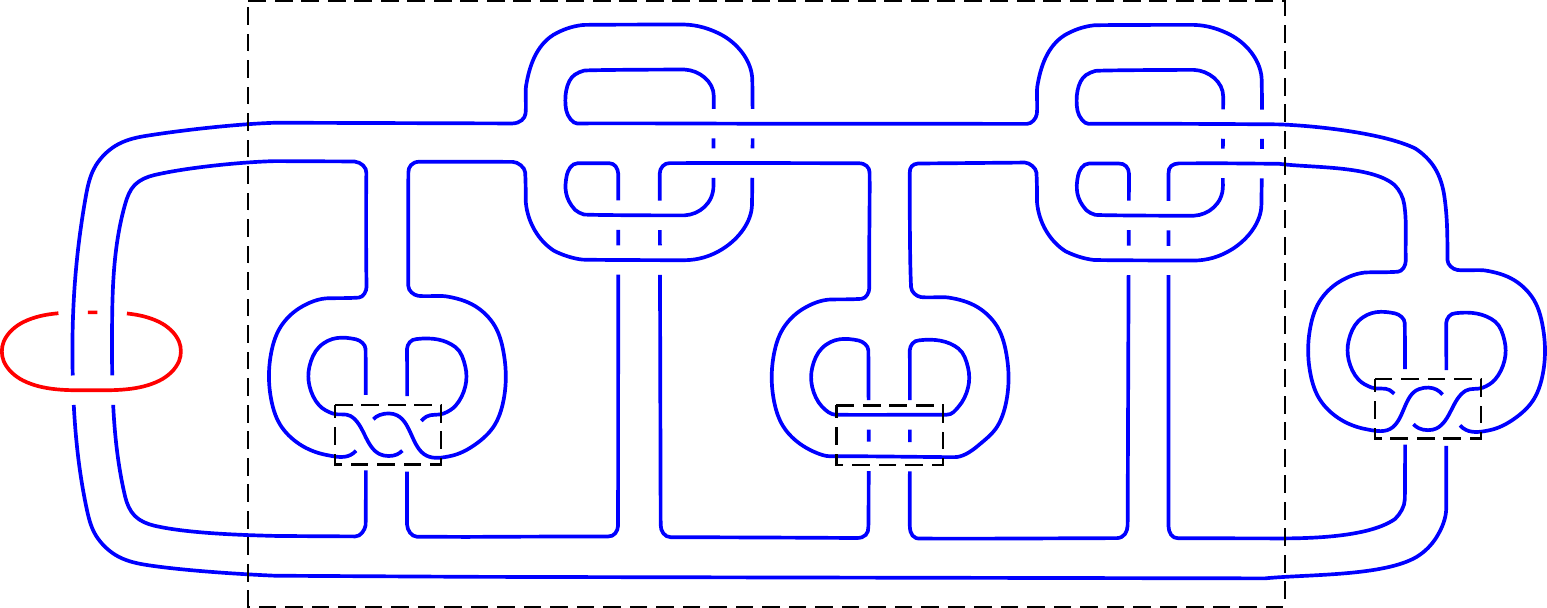}
\caption{The link $M_{n_1,\dots,n_m}$ for $m=3$ and $(n_1,n_2,n_3)=(1,0,-1)$.}
\label{m10-1a}
\end{figure}

\begin{example} \label{m-example}
Let us mention alternative links $M_{n_1,\dots,n_m}$ such that
\[\beta_i(M_{n_1,\dots,n_m})=
\begin{cases}
n_i\text{ for }i\le m,\\
0\text{ for }i>m.
\end{cases}\]
The link $M_{1,0,-1}$ is shown in Figure \ref{m10-1a} and again in Figure \ref{m10-1b}.
\end{example}

\begin{figure}[h]
\includegraphics{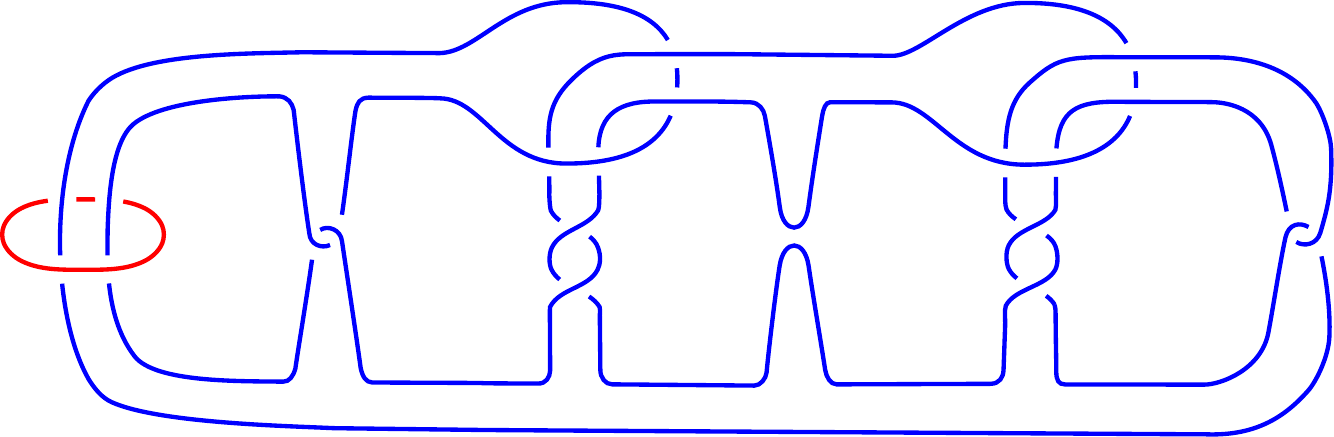}
\caption{$M_{1,0,-1}$ redrawn in a different way.}
\label{m10-1b}
\end{figure}

Figure \ref{m10-1c} shows alternative links $M_{n_1,n_2,\dots}$ for each sequence $n_1,n_2,\ldots\in\Z$
such that
\[\beta_i(M_{n_1,n_2,\dots})=n_i.\]
Let us note that $M_{0,0,\dots}$ is equivalent to $W_{0,0,\dots}$.
\begin{figure}[h]
\includegraphics{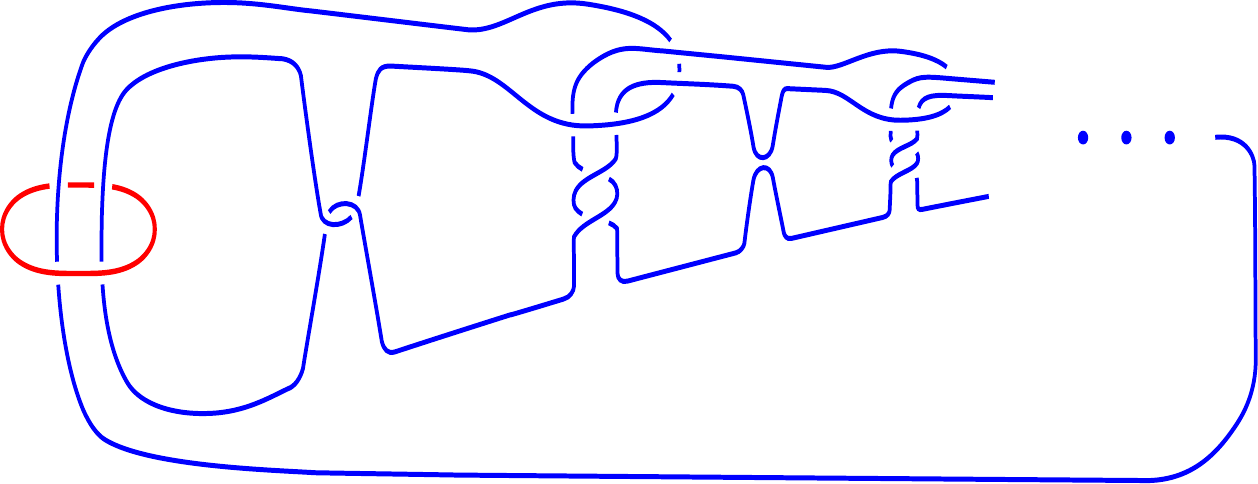}
\caption{The link $W_{n_1,n_2,\dots}$ where $(n_1,n_2)=(1,0)$.}
\label{m10-1c}
\end{figure}

\begin{example} The residue class $\bar\beta_n$ of $\beta_n$ modulo the ideal $(\beta_1,\dots,\beta_{n-1})$
of $\Z$ is the same as $\bar\mu(\underbrace{1\ldots1}_{2n}22)$ (see Remark \ref{mu}).
For the purposes of Theorems \ref{th1} and \ref{th2} the difference between $\beta_i$ and $\bar\beta_i$
is essential, because the sequence $\bar\beta_1(L),\bar\beta_2(L),\dots$ is eventually zero for every link $L$ 
(because $\Z$ contains no infinite ascending chain of ideals). 
\end{example}

This observation can be generalized to all $\bar\mu$-invariants using the following lemma.

\begin{lemma*}[Higman \cite{Hig}*{Theorem 2.1(i)$\Rightarrow$(iv) and Theorem 4.1}]
Every infinite sequence $I_1,I_2,\dots$ of multi-indices with entries from $\{1,\dots,m\}$ has 
an infinite subsequence $I_1',I_2',\dots$ such that each $I_k'$ embeds in $I_{k+1}'$.
\end{lemma*}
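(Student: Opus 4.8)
The plan is to derive the statement from the \emph{well-quasi-order} property of the subsequence-embedding order on finite words over the finite alphabet $A=\{1,\dots,m\}$: I will show that every infinite sequence $I_1,I_2,\dots$ of multi-indices contains an infinite subsequence each term of which embeds in the next one. This is plainly what the lemma asks for, and it is a standard strengthening of the bare statement that some $I_i$ embeds in some later $I_j$.

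Granting for the moment that the embedding order is a well-quasi-order, i.e.\ that \emph{every} infinite sequence of words has a pair $i<j$ with $I_i$ embedding in $I_j$, the lemma follows by a Ramsey argument. Color a pair $\{i,j\}$ with $i<j$ white if $I_i$ embeds in $I_j$ and black otherwise. By the infinite Ramsey theorem for pairs there is an infinite set $S\subseteq\N$ all of whose pairs get the same color; enumerate $S$ as $i_1<i_2<\dots$. The common color cannot be black, for then no term of the infinite sequence $I_{i_1},I_{i_2},\dots$ would embed in a later one, contradicting the well-quasi-order property applied to that very sequence. Hence the color is white, and $I'_k:=I_{i_k}$ is the desired subsequence (each $I'_k$ embeds even in all later $I'_l$).

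It remains to prove that the embedding order on $A^*$ is a well-quasi-order, for which I would use Nash-Williams' minimal bad sequence argument. Call an infinite sequence $w_1,w_2,\dots$ of words \emph{bad} if $w_i$ fails to embed in $w_j$ whenever $i<j$; suppose, for contradiction, that a bad sequence exists. Choose one as follows: let $w_1$ be of least possible length among first terms of bad sequences, and, having chosen $w_1,\dots,w_k$, let $w_{k+1}$ be of least possible length among $(k+1)$-th terms of bad sequences with first $k$ terms $w_1,\dots,w_k$. No $w_i$ is the empty word, since the empty word embeds in everything and the sequence is infinite, so write $w_i=a_iv_i$ with $a_i\in A$ the first letter of $w_i$. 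As $A$ is finite, some letter $a$ satisfies $a_i=a$ for infinitely many $i$; let $i_1<i_2<\dots$ list those indices and consider the sequence $w_1,\dots,w_{i_1-1},v_{i_1},v_{i_2},v_{i_3},\dots$. Since $|v_{i_1}|<|w_{i_1}|$, the minimality of our chosen sequence forces this new sequence to be not bad, so one of its words embeds in a later one. That pair cannot lie within $w_1,\dots,w_{i_1-1}$, as the original sequence is bad; it cannot be $w_p$ (with $p<i_1$) embedding in some $v_{i_q}$, since $v_{i_q}$ embeds in $w_{i_q}$ and $p<i_q$; so it must be $v_{i_p}$ embedding in $v_{i_q}$ with $p<q$, whence prepending $a$ gives $w_{i_p}=av_{i_p}$ embedding in $av_{i_q}=w_{i_q}$ with $i_p<i_q$, again contradicting badness. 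This contradiction shows no bad sequence exists.

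The only step requiring genuine care is the construction of the minimal bad sequence and the verification that each of the three cases for the forced good pair in the shortened sequence is impossible; the remainder is bookkeeping, and the finiteness of $A$ is used only to find a repeated first letter. (Higman's original proof instead runs by induction on the alphabet size, but the argument above is self-contained and shorter.)
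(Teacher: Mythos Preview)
Your proof is correct and handles the lemma for arbitrary $m$, but it proceeds very differently from the paper. The paper only supplies a proof for $m=2$, and that proof is elementary and bespoke: writing $\alpha(I)$ for the number of consecutive $21$'s in $I$, it splits into two cases. If $\alpha(I_n)$ is unbounded, then for any $i$ one can find $j>i$ with $\alpha(I_j)\ge 2\lambda(I_i)$, and such an $I_j$ contains every binary word of length $\lambda(I_i)$ as a subsequence; iterating gives the chain. If $\alpha(I_n)$ is bounded, one passes to a subsequence with $\alpha$ constant, writes each word in block form $1^{\lambda_1(n)}2^{\lambda_2(n)}\cdots 2^{\lambda_{2\alpha}(n)}$, and then passes to further subsequences finitely many times to make each block length nondecreasing in $n$. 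There is no Ramsey and no minimal bad sequence.

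Your route---Nash-Williams' minimal bad sequence to establish the well-quasi-order, then the infinite Ramsey theorem to extract a monochromatic (hence ascending) chain---is the standard modern proof and has the advantage of working uniformly for all $m$ (indeed over any well-quasi-ordered alphabet). The paper's argument buys you a completely self-contained and elementary treatment in the binary case, with no appeal to Ramsey or to the somewhat delicate minimal-bad-sequence construction, at the cost of not generalizing directly to $m>2$. One small remark: your use of Ramsey is correct but heavier than necessary; the passage from ``every infinite sequence has a good pair'' to ``every infinite sequence has an infinite ascending subsequence'' can also be done by the pigeonhole observation that only finitely many indices $i$ can fail to embed into some later term.
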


By a {\it multi-index} with entries from a set $S$ we mean any finite sequence of elements of $S$.
(These are alternatively known as words in the alphabet $S$.)
A multi-index $I$ {\it embeds} in a multi-index $J$ if $I$ is a subsequence of $J$ (in other words,
$J$ is obtained from $I$ by omitting some entries, but keeping the order of the remaining ones).

For the reader's convenience we include a direct proof of the case $m=2$.

\begin{proof}[Proof for $m=2$]
Given a multi-index $I$ (with entries from $\{1,2\}$), let $\lambda(I)$ be the length of $I$ and let $\alpha(I)$ be 
the number of decreases in $I$, that is, the number of $2$'s that are immediately followed by a $1$.
We consider two cases.

First suppose that $\alpha(I_n)$ is unbounded.
Given an $i$, let $j$ be such that $\alpha(I_j)\ge 2\lambda(I_i)$.
Then it is easy to see that $I_i$ embeds in $I_j$.

So we may assume that $\alpha(I_n)$ is bounded.
Then upon passing to a subsequence, we may assume that $\alpha(I_n)=\alpha$ does not depend on $n$.
Thus each 
\[I_n=(\underbrace{1\ldots1}_{\lambda_1(n)}\underbrace{2\ldots2}_{\lambda_2(n)}\ldots\ldots
\underbrace{1\ldots1}_{\lambda_{2\alpha-1}(n)}\underbrace{2\ldots2}_{\lambda_{2\alpha}(n)})\]
for some positive integers $\lambda_i(n)$, except for $\lambda_1(n)$ and $\lambda_{2\alpha}(n)$, which 
are nonnegative.
Upon passing to a subsequence we may assume that $\lambda_1(1)\le\lambda_1(2)\le\lambda_1(3)\le\dots$.
Given that, upon passing to a further subsequence we may additionally assume that 
$\lambda_2(1)\le\lambda_2(2)\le\lambda_2(3)\le\dots$.
Proceeding in this fashion, we will eventually get that $\lambda_k(1)\le\lambda_k(2)\le\lambda_k(3)\le\dots$
for all $k=1,\dots,2\alpha$.
\end{proof}

\begin{corollary} \label{mu-invariants} Let $L$ be an $m$-component link.
Then there exist only finitely many multi-indices $I$ (with entries from $\{1,\dots,m\}$) 
such that $\bar\mu_I(L)\ne 0$.
\end{corollary}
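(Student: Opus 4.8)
The plan is to combine the Higman lemma just stated (in its general $m$-colour form) with the classical fact that for a fixed link $L$, the nonvanishing $\bar\mu$-invariants obey divisibility relations coming from shuffles and cyclic symmetry. First I would recall the two structural facts about Milnor invariants that make the argument work: (i) $\bar\mu_I(L)$ is defined as a residue in $\Z/\Delta_I(L)$, where $\Delta_I(L)$ is the gcd of $\bar\mu_J(L)$ over all proper subsequences $J$ of $I$ obtained by deleting at least one entry and cyclically permuting (the indeterminacy ideal), so that if $J$ embeds in $I$ and $\bar\mu_J(L)=0$ for all such $J$ then $\bar\mu_I(L)$ is an honest integer invariant; and (ii) more to the point, the shuffle relations of Milnor give $\bar\mu_I(L)=0$ whenever $I$ contains, up to cyclic permutation, a subword on which $L$ already has vanishing invariants — but I only need the weaker monotonicity statement below, so I may not need the full strength of (i)–(ii).

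Here is the cleaner route I would actually take. Suppose for contradiction that $\bar\mu_I(L)\ne 0$ for infinitely many multi-indices $I_1,I_2,\dots$. By Higman's lemma there is an infinite subsequence, which I relabel $I_1,I_2,\dots$, such that $I_k$ embeds in $I_{k+1}$ for every $k$; in particular $I_1$ embeds (as a subsequence, after deleting entries) in every $I_k$ with $k>1$, and each of these $I_k$ is strictly longer than $I_1$ once we pass far enough along. Now I invoke the fundamental property of $\bar\mu$-invariants with repeating indices — essentially Milnor's observation in \cite{Mi}*{\S4}, used already in Remark \ref{mu} above — that deleting an index from a multi-index and reducing: if $J$ is obtained from $I$ by deleting one occurrence of a letter, then $\bar\mu_J(L)$ divides $\bar\mu_I(L)$ in the sense that $\bar\mu_I(L)$ lies in the indeterminacy ideal determined by such $J$. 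Hence if $\bar\mu_J(L)=0$ for every proper subword $J$ of $I_k$ that arises this way, the indeterminacy is all of $\Z$ and $\bar\mu_{I_k}(L)=0$, contradicting $\bar\mu_{I_k}(L)\ne 0$. Tracing this back: for $k$ large, $I_k$ has a proper subword equal to some earlier $I_j$ (since $I_j$ embeds in $I_k$ and is shorter), but $\bar\mu_{I_j}(L)\ne 0$ does not immediately give a contradiction — so the argument must instead run through the ascending chain of indeterminacy ideals.

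The honest formulation, and the step I expect to be the main obstacle, is the bookkeeping of the indeterminacy ideals. Define, for each multi-index $I$, the ideal $\mathfrak a_I(L)\subseteq\Z$ generated by all $\bar\mu_J(L)$ with $J$ a proper cyclic-subword of $I$ (length $<$ length of $I$); then $\bar\mu_I(L)\in\Z/\mathfrak a_I(L)$, and $\bar\mu_I(L)\ne 0$ forces $\mathfrak a_I(L)\subsetneq\Z$, equivalently $\mathfrak a_I(L)$ is a proper ideal. If $J$ embeds in $I$ then every proper subword of $J$ is a proper subword of $I$, so $\mathfrak a_J(L)\subseteq\mathfrak a_I(L)$; applying this along the Higman chain $I_1,I_2,\dots$ gives an ascending chain $\mathfrak a_{I_1}(L)\subseteq\mathfrak a_{I_2}(L)\subseteq\cdots$ of ideals in $\Z$, all proper. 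Since $\Z$ is Noetherian this chain stabilizes, say $\mathfrak a_{I_k}(L)=\mathfrak a_{I_{k+1}}(L)=\cdots=:\mathfrak a$ for $k\ge k_0$, with $\mathfrak a=(d)$ proper, i.e.\ $d\ne\pm1$. But $I_{k_0}$ is a proper subword of $I_{k_0+1}$ (it embeds and is strictly shorter, passing further along if necessary), hence $\bar\mu_{I_{k_0}}(L)$ is one of the generators of $\mathfrak a_{I_{k_0+1}}(L)=\mathfrak a$, so $\bar\mu_{I_{k_0}}(L)\equiv 0$ in $\Z/\mathfrak a_{I_{k_0}}(L)$ only if $\mathfrak a\subseteq\mathfrak a_{I_{k_0}}(L)$ — which holds by stabilization — forcing $\bar\mu_{I_{k_0}}(L)=0$, the desired contradiction. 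The delicate points are getting the precise definition of the indeterminacy ideal right (cyclic symmetry of $\bar\mu$, and which subwords genuinely contribute to the indeterminacy, cf.\ \cite{Mi}), and ensuring the embedded copy $I_{k_0}\hookrightarrow I_{k_0+1}$ is a \emph{proper} subword so that it actually feeds into the indeterminacy ideal; both are handled by passing to a further subsequence so that lengths are strictly increasing, which is harmless since infinitely many of the $I_n$ must have distinct lengths (only finitely many multi-indices of each fixed length exist).
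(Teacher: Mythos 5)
Your argument is essentially the paper's: apply Higman's lemma to extract a chain $I_1,I_2,\dots$ in which each $I_k$ properly embeds in $I_{k+1}$, note that the indeterminacy ideals then form an ascending chain of ideals of $\Z$, and use Noetherianity to force some $\mu_{I_k}$ to lie in its own indeterminacy ideal, contradicting $\bar\mu_{I_k}(L)\ne 0$ (the paper phrases this as the impossibility of a strictly ascending chain, you phrase it via stabilization --- the same fact). The only nit is that the ideal $\mathfrak a_I(L)$ should be generated by the integers $\mu_J$ rather than the residue classes $\bar\mu_J(L)$, though an easy induction shows the resulting ideal is the same, so this does not affect the argument.
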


We recall that each $\bar\mu_I(L)$ is the residue class of a certain integer $\mu_I$ modulo the ideal 
$(\Delta_I)$ of $\Z$ generated by the set $\Delta_I:=\{\mu_J\mid J\text{ properly embeds in } I\}$.%
\footnote{The original definition of $\bar\mu_I(L)$ in \cite{Mi} also involves cyclic permutations, but they 
are redundant due to the cyclic symmetry of the $\bar\mu$-invariants \cite{Mi}*{Theorem 6}.}
This is the only property of $\bar\mu$-invariants that we need.

\begin{proof}
Suppose that there exists an infinite sequence of pairwise distinct multi-indices $I_1,I_2,\dots$ such that 
each $\bar\mu_{I_k}(L)\ne 0$.
By Higman's theorem we may assume that each $I_k$ embeds in $I_{k+1}$.
Then all $J$ that properly embed in $I_k$, as well as $I_k$ itself, all properly embed in $I_{k+1}$.
Therefore $\Delta_{I_k}\cup\{\mu_{I_k}\}\subset\Delta_{I_{k+1}}$.
On the other hand, $\mu_{I_k}\notin(\Delta_{I_k})$ by our hypothesis.
Hence $(\Delta_{I_k})\subsetneqq (\Delta_{I_k}\cup\{\mu_{I_k}\})\subset (\Delta_{I_{k+1}})$.
Thus we get an infinite chain of strictly ascending ideals of $\Z$, which is a contradiction.
\end{proof}

\section{Rationality} \label{rationality}

It is well-known (see e.g.\ \cite{St}*{4.1.1}) that a power series $R(x)=\sum_{i=1}^\infty n_ix^i\in\Z[[x]]$ 
is rational (i.e.\ is of the form $P/(1+xQ)$ for some polynomials $P,Q\in\Z[x]$) if and only if its coefficients 
satisfy a linear recurrence relation (i.e.\ there exist a $k_0\in\N$ and a finite sequence $c_1,\dots,c_m\in\Z$ 
such that $n_k=c_1n_{k-1}+\dots+c_mn_{k-m}$ for all $k>k_0$).

This implies, for example, that $C_L(z)=\beta_1z+\beta_2z^2+\beta_3z^3+\dots$ is rational if and only if 
$-zC_L(-z^2)=\beta_1z^3-\beta_2z^5+\beta_3z^7-\dots$ is rational.

\begin{theorem} \label{formula}
Let $L=(K_0,K_1)$ be a PL link, let $K$ be a band connected sum% 
\footnote{That is, $b\:I\x I\to S^3$ is a PL embedding such that $b^{-1}(K_i)=I\x\{i\}$ and $K$ is 
obtained from $K_0\cup K_1$ by replacing $b(I\x\partial I)$ with $b(\partial I\x I)$.}
$K_0\#_b K_1$ and let $J_i$ be a pushoff of $K_i$, disjoint from the band, such that $\lk(J_i,K_0\cup K_1)=0$.

(a) $\nabla_L(z)=z\nabla_K(z)\big(\lk(L)-C_{\Lambda}(-z^2)\big)$, where $\Lambda=(J_1,K)$.

(b) If $\lk(L)=0$, then $-zC_L(-z^2)=\nabla_\Lambda(z)/\nabla_{K_1}(z)$, where $\Lambda=(K,-J_0)$.
\end{theorem}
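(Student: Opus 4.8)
The idea is to obtain part (b) as a direct corollary of part (a), and to prove part (a) by relating the Conway polynomial of the band sum link $L = (K_0, K_1)$ to the Cochran power series $C_\Lambda$ of an auxiliary link $\Lambda = (J_1, K)$. First I would recall the skein-theoretic interpretation of $C_L$: by the results on Cochran's invariants discussed in \S\ref{Cochran} together with the Kojima--Yamasaki $\eta$-function, for a PL link $L$ with $\lk(L) = 0$ the series $C_L$ is (up to the change of variable $x = -z^2$ and a factor of $-z$) the ratio governing how the one-variable Conway polynomial behaves under the band-summing operation. More precisely, the Tsukamoto--Yasuhara factorization theorem \cite{TY} expresses $\nabla$ of a link built by banding in terms of $\nabla$ of the band sum and an $\eta$-type factor; rewriting that factor via Cochran's change of variable yields exactly the quantity $\lk(L) - C_\Lambda(-z^2)$ appearing in (a).

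**Key steps, in order.** (1) Fix the band $b$, form $K = K_0 \#_b K_1$, and choose the pushoffs $J_0, J_1$ with zero total linking number with $K_0 \cup K_1$; verify that $\Lambda = (J_1, K)$ has $\lk(\Lambda) = 0$ so that $C_\Lambda$ is defined. (2) Apply the Tsukamoto--Yasuhara factorization theorem to express $\nabla_L(z)$ as a product of $\nabla_K(z)$ with a correction term, and identify that correction term with the Kojima--Yamasaki $\eta$-function (equivalently, the $\lambda$-polynomial) of the relevant sublink. (3) Invoke Cochran's change of variable \cite{Co} to rewrite the $\eta$-function as $\lk(L) - C_\Lambda(-z^2)$; here one must be careful that the normalization of $\eta$ and the sign/exponent conventions for $C_\Lambda(z) = \sum_i \beta_i(\Lambda) z^i$ match, which pins down the sign in front and the substitution $z \mapsto -z^2$. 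This establishes (a). (4) For (b), set $\lk(L) = 0$ in (a), so $\nabla_L(z) = -z \nabla_K(z) C_\Lambda(-z^2)$ with $\Lambda = (J_1, K)$; then observe that the roles of $K_0$ and $K_1$ are symmetric enough that applying (a) to the transposed link $(K_1, K_0)$ with band sum $K$ (which is the same underlying knot) and pushoff $-J_0$ instead gives $-z C_L(-z^2) = \nabla_\Lambda(z)/\nabla_{K_1}(z)$ with $\Lambda = (K, -J_0)$, after substituting the standard formula $\nabla_L(z) = \nabla_{K_0}(z)\nabla_{K_1}(z)\cdot(\dots)$ is \emph{not} available since $\lk = 0$ forces a higher-order term—instead one uses that banding $K_1$ to $K_0$ and banding $K_0$ to $K_1$ produce isotopic knots $K$, and divides the two instances of (a) to cancel $\nabla_K$.

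**Main obstacle.** The crux is Step (3): matching conventions. The Tsukamoto--Yasuhara theorem, Cochran's $\beta_i$, and the Kojima--Yamasaki $\eta$-function each come with their own orientations, sign rules, and variable normalizations, and the clean statement $-zC_L(-z^2) = \nabla_\Lambda(z)/\nabla_{K_1}(z)$ will emerge only if all of these are reconciled correctly. In particular one must check that the band sum link $K$ appears with the right orientation in $\Lambda = (K, -J_0)$ (hence the orientation reversal on the pushoff $J_0$), and that the pushoff is taken on the correct component with the framing forced by $\lk(J_0, K_0 \cup K_1) = 0$. I expect the topological content to be entirely supplied by \cite{TY} and \cite{Co}; the real work is bookkeeping the substitution $x = -z^2$ and the factor $-z$ so that the power series $\sum_i \beta_i z^i$ lands on the nose. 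A secondary subtlety is confirming that $\nabla_{K_1} \mid \nabla_\Lambda$ in $\Z[z]$—i.e.\ that the right-hand side is genuinely a polynomial expression equal to the power series on the left after clearing denominators—which follows because $\Lambda$ contains $K_1$-pushoff data making $\nabla_{K_1}$ a factor of $\nabla_\Lambda$ by the multiplicativity built into the factorization theorem.
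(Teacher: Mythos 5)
Your outline of part (a) follows the paper's route: arrange the data so that the Tsukamoto--Yasuhara factorization applies, identify the resulting covering-linkage factor with the Kojima--Yamasaki $\eta$-function of $\Lambda=(J_1,K)$, and convert via Cochran's change of variable $x=(1-t)(1-t^{-1})=-z^2$. You underestimate the geometric work hidden in ``apply \cite{TY}'': one must first isotope a Seifert surface for $L$ to contain the band (after killing the algebraic intersections of its core with the surface), and the factor produced by \cite{TY} is $\lk_X(\tilde J_1^+,\tilde J_1)$ for the surface-framed pushoff $J_1^+$, not the zero pushoff; the shift by $\lk(L)$ in $\lk(L)-C_\Lambda(-z^2)$ comes from an explicit homotopy argument comparing $J_1^+$ with a zero pushoff $J_1'$, not from mere normalization bookkeeping. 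Still, for (a) your plan is essentially the paper's.

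The genuine gap is in your step (4), the deduction of (b) from (a). Applying (a) to the transposed link $(K_1,K_0)$ with the same band sum $K$ and pushoff $J_0$ yields $\nabla_L(z)=-z\nabla_K(z)\,C_{(J_0,K)}(-z^2)$; dividing this against the instance for $(K_0,K_1)$ only cancels $\nabla_L$ and $\nabla_K$ and gives $C_{(J_1,K)}=C_{(J_0,K)}$, a symmetry statement in which neither $C_L$, nor $\nabla_\Lambda$ with $\Lambda=(K,-J_0)$, nor $\nabla_{K_1}$ ever appears. The paper instead applies (a) to a \emph{new} link $\bar L=(\bar K_0,\bar K_1)=(K,-J_0)=\Lambda$ itself (which has $\lk=0$), with a new band $\bar b$ lying in the annulus $A$ between $K_0$ and its pushoff $J_0$: the new band sum $\bar K=K\#_{\bar b}(-J_0)$ is ambient isotopic to $K_1$ (the $K_0$ part cancels against its reversed pushoff across $A$ and along the original band), and the new auxiliary link $\bar\Lambda=(\bar J_1,\bar K)$ is equivalent to $(-K_0,K_1)$, whose Cochran series equals $C_L$ by the orientation-insensitivity of the $\beta_i$ (Remark \ref{properties}(d)). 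Then (a) for $\bar L$ reads $-zC_L(-z^2)=\nabla_\Lambda(z)/\nabla_{K_1}(z)$, which is (b). This re-application of (a) to $(K,-J_0)$ with a band inside the pushoff annulus is the key idea missing from your proposal; without it the transposition-and-division argument does not produce the claimed formula. (Your final worry about divisibility $\nabla_{K_1}\mid\nabla_\Lambda$ in $\Z[z]$ is beside the point: (b) is an identity of formal power series, and the quotient is automatically the power series $-zC_L(-z^2)$ once the correct instance of (a) is in hand.)
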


Part (b) is identical with Theorem \ref{th3} and follows from (a).
Part (a) is a consequence of the two-component case of the Tsukamoto--Yasuhara factorization theorem \cite{TY}.

\begin{proof}[Proof. (a)] Let $F_0$ be a Seifert surface for $L$, disjoint from $J_1$.
We may assume that the core $C:=b(\{\frac12\}\x I)$ of the band $b$ meets $F_0$ transversely, and by rotating 
its end $b(\{\frac12\}\x [0,\eps])$ around $K_0$ we may assume that the algebraic number $(C\cdot F_0)$ of 
transverse intersections between $C$ and $F_0$ is zero.
Then by attaching pipes to $F_0$ going along (possibly overlapping) arcs in the interior of $C$ between 
algebraically canceling pairs in $C\cap F_0$ we may assume that $F_0$ is disjoint from $C$.
Then $F_0$ can be made disjoint also from the band $b(I\x I)$ by ambient isotopy.
Now by attaching a pipe to $F_0$ along the entire $C$ me may assume that $b(I\x I)\subset F_0$.
This can be made so that $J_1$ remains a close pushoff of $K_1$, disjoint from $F_0$.
Let $J_1^+$ be another close pushoff of $K_1$, disjoint from $F_0$ and $J_1$.

Let $F$ be the Seifert surface for $K$ obtained from $F_0$ by removing $b\big((0,1)\x I\big)$.
Let $X$ be the infinite cyclic covering of $S^3\but K$ and let us pick a fundamental domain $D$ of the action 
of $\Z$ on $X$ whose boundary consists of two consecutive lifts of $F$.
Let $\tilde J_1$ and $\tilde J_1^+$ be the lifts of $J_1$ and $J_1^+$ in $D$.
Since $H_1(X)$ is $\Z[\Z]$-torsion (see e.g.\ \cite{MR}*{\S2.3}), for every knot $\K\subset X$ there exists 
a $\lambda_{\K}\in\Z[Z]$ such that $\lambda_{\K}\K$ is null-homologous and hence bounds a $2$-chain $\zeta_K$.
For a link $(\K,\K')$ in $X$ the rational function 
$\lk_X(\K,\K')(t)=\lambda_{\K}^{-1}\sum_{n\in\Z} (\zeta_{\K}\cdot t^n\K')t^n$ is well-defined, i.e.\ does not
depend on the choices of $\lambda_{\K}$ and $\zeta_{\K}$ (see e.g.\ \cite{MR}*{\S2.3}).
Then by the Tsukamoto--Yasuhara theorem \cite{TY}*{Theorem 1.2}
$\nabla_L(z)=-z\nabla_K(z)\lk_X(\tilde J_1^+,\tilde J_1)(t)$, where $z=t^{1/2}-t^{-1/2}$.%
\footnote{The statement in \cite{TY} erroneously identifies $J_1$ with $K_1$ (which must be distinct)
and omits the sign --- which must be present, as is easily checked for the first two coefficients
(see Remark \ref{co2}).}

Let $J_1'$ be a close zero pushoff of $J_1$ (that is, one satisfying $\lk(J_1',J_1)=0$), disjoint from $J_1^+$,
and let $\tilde J_1'$ be its lift in $D$.
Since $J_1^+$ and $J_1'$ are both homotopic to $J_1$ in $S^3\but F$, they cobound a homotopy $H$ in $S^3\but F$.
On the other hand, $\lk(J_1^+,J_1)=\lk(J_1^+,K_1)=\lk(J_1^+,L)-\lk(J_1^+,K_2)=0-\lk(K_1,K_2)=-\lk(L)$.
Hence $(H\cdot J_1)=-\lk(L)$, where $H$ is oriented so that $\partial H=J_1^+-J_1'$.
Then $(\tilde H\cdot\tilde J_1)=-\lk(L)$, where $\tilde H$ is a lift of $H$ in $D$.
This easily implies (cf.\ \cite{TY}*{Lemma 2.1(a)}) that 
$\lk_X(\tilde J_1^+,\tilde J_1)(t)=\lk_X(\tilde J_1',\tilde J_1)(t)-\lk(L)$.
But $\lk_X(\tilde J_1',\tilde J_1)(t)$ is by definition the Kojima--Yamasaki $\eta$-function $\eta_\Lambda(t)$.
By Cochran's theorem \cite{Co}*{Theorem 7.1} $\eta_\Lambda(t)=C_\Lambda(x)$, where $x=(1-t)(1-t^{-1})$.
We have $x=2-t-t^{-1}=-z^2$.
Thus $\nabla_L(z)=-z\nabla_K(z)\big(C_\Lambda(-z^2)-\lk(L)\big)$.
\end{proof}

\begin{proof}[(b)] We may assume that $K_0$ and $J_0$ cobound an embedded annulus $A$, disjoint from $K_1$
and $b\big(I\x (0,1]\big)$.
Let $\bar b(I\x I)\subset A$ be a small band connecting an arc in $K_0$ with its pushoff in $J_0$.
Let us note that $J_0$ is a zero pushoff of $K_0$, that is, $\lk(J_0,K_0)=\lk(J_0,L)-\lk(J_0,K_1)=0-\lk(L)=0$.
Let $J_0'$ be another zero pushoff of $K_0$, disjoint from $A$ and $b(I\x I)$.

Now let $\bar L=(\bar K_0,\bar K_1)$, where $\bar K_0=K$ and $\bar K_1=-J_0$.
Let us note that $\lk(\bar L)=0$.
Let $\bar\Lambda=(\bar J_1,\bar K)$, where $\bar K=\bar K_0\#_{\bar b}\bar K_1$ and $\bar J_1:=-J_0'$
is a pushoff of $\bar K_1=-J_0$, disjoint from $\bar b(I\x I)$ and such that $\lk(\bar J_1,\bar L)=0$.

We have $\bar L=(K,-J_0)=\Lambda$ and $\bar K=K\#_{\bar b}(-J_0)=(K_0\#_b K_1)\#_{\bar b}(-J_0)$.
It is easy to see that there is an ambient isotopy along $A$ and then along $b(I\x I)$, taking $\bar K$ 
onto $K_1$ and keeping $\bar J_1=-J_0$ fixed.
Thus $\bar\Lambda$ is equivalent to $(-J_0,K_1)$, which in turn is equivalent to $(-K_0,K_1)$.
By Remark \ref{properties}(d) each $\beta_i(-K_0,K_1)=\beta_i(K_0,K_1)$.
Hence $-zC_L(-z^2)=-zC_{\bar\Lambda}(-z^2)=\nabla_{\bar L}(z)/\nabla_{\bar K}(z)=
\nabla_\Lambda(z)/\nabla_{K_1}(z)$.
\end{proof}

\begin{remark} One could try to extend $C_L$ to PL links with non-vanishing linking number by declaring 
$-z\hat C_L(-z^2):=\nabla_\Lambda(z)/\nabla_{K_1}(z)$.
However, this $\hat C_L$ generally depends on the choice of the band $b$.
Indeed, using the skein relation for the Conway polynomial it is easy to see that under a self-intersection
of $K_1$, which splits $K_1$ into two lobes $K_1^a$ and $K_1^b$, the coefficient $\hat\beta_1(L)$ of 
$\hat C_L(z)$ at $z^3$ jumps by $-\lk(K_1^b,K_0)^2$ or $-\lk(K_1^a,K_0)^2$ according as $b$ is attached to 
$K_1^a$ or $K_1^b$.
\end{remark}

\begin{remark} Theorem \ref{formula}(b) should be compared with the following results.

\begin{itemize}
\item If $K_1$ is unknotted, $\lk(L)=0$ and $K_0^s$ is obtained from $K_0$ by performing 
the $(+1)$-surgery on $K_1$,%
\footnote{That is, by making a $+1$ Dehn twist of $S^3\but K_1$ along a disk spanned by $K_1$.}
then $1+C_L(-z^2)=\nabla_{K_0^s}(z)/\nabla_{K_1}(z)$ \cite{JLWW}*{Corollary 5.6}
(see also \cite{PY}*{Theorem 5.1(5)}).

\item If $\lk(L)=0$, then $C_L(-z^2)$ is equivalent, up to units of $\Z[\Z]$, to 
$\Delta_{\tilde M_L}(y^2)/\nabla_{K_1}(z)$, where $z=y-y^{-1}$, $M_L$ is the $3$-manifold obtained by 
$0$-surgery on $L$, and $\Delta_{\tilde M_L}(t)$ is the Alexander polynomial of the infinite cyclic 
covering $\tilde M_L\to M_L$ associated to the homomorphism
$\pi_1(M_L)\to H_1(M_L)\simeq H_1(S^3\but L)\to H_1(S^3\but K_1)\simeq\Z$ \cite{KY}*{Theorem 1}.
\end{itemize}
\end{remark}

\begin{remark} \label{co2} For a two-component PL link $L=(K_0,K_1)$
\[\frac{\nabla_L(z)}{\nabla_{K_0}(z)\,\nabla_{K_1}(z)}=\alpha_0(L)z+\alpha_1(L)z^3+\alpha_2(L)z^5+\dots,\] where 
$\alpha_0(L)=\lk(L)$; if $\lk(L)=0$, then $\alpha_1(L)=\beta(L)$; and if $\beta(L)=\lk(L)=0$, then
$\alpha_2=\beta_2(L)+\delta(L)+\beta_2'(L)$ (see Remarks \ref{properties}(e) and \ref{mu} concerning $\beta_i'$
and $\delta$) \cite{Co2}.
\end{remark}

Theorem \ref{formula} yields

\begin{corollary} \label{rational} \cite{Co} If $L$ is a two-component PL link, then $C_L$ is rational.
\end{corollary}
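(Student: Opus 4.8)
The plan is to derive rationality of $C_L$ directly from the factorization formula of Theorem \ref{formula}. First I would dispose of the case $\lk(L)=0$, which is the heart of the matter: by Theorem \ref{formula}(b), $-zC_L(-z^2)=\nabla_\Lambda(z)/\nabla_{K_1}(z)$ for the explicit two-component link $\Lambda=(K,-J_0)$. Both $\nabla_\Lambda(z)$ and $\nabla_{K_1}(z)$ are (Laurent) polynomials in $z$ with integer coefficients, so the quotient is a rational function of $z$; hence $-zC_L(-z^2)$, viewed as a power series in $z$, is rational. By the elementary fact recalled at the start of \S\ref{rationality} — a power series $\sum n_i x^i\in\Z[[x]]$ is rational iff its coefficients satisfy a linear recurrence — and by the remark there that $C_L(x)$ is rational iff $-zC_L(-z^2)$ is rational (the change of variable $x=-z^2$ together with multiplication by $-z$ preserves the linear-recurrence property of the coefficient sequence), we conclude that $C_L$ itself is rational.

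For the general case I would reduce to $\lk(L)=0$. If $L=(K_0,K_1)$ is an arbitrary two-component PL link, recall that $C_L$ was only defined under the hypothesis $\lk(L)=0$ in \S\ref{Cochran} (the construction of $\beta$ uses an $h$-Seifert surface for $K$ disjoint from $K'$, which exists precisely because the linking number vanishes); so ``$C_L$'' in the statement already presupposes $\lk(L)=0$, and there is nothing further to reduce. Alternatively, one can invoke Theorem \ref{formula}(a) directly: $\nabla_L(z)=z\nabla_K(z)\bigl(\lk(L)-C_\Lambda(-z^2)\bigr)$ shows that $C_\Lambda(-z^2)=\lk(L)-\nabla_L(z)/\bigl(z\nabla_K(z)\bigr)$ is rational for the auxiliary link $\Lambda=(J_1,K)$ with $\lk(\Lambda)=0$, and since every two-component link with vanishing linking number arises (up to the relevant equivalence) as such a $\Lambda$, this again yields the conclusion; but the cleanest route is part (b).

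The only genuine subtlety is the bookkeeping of the change of variables. One must check that $\nabla_{K_1}(z)$ is a nonzero power series so the quotient makes sense as a formal power series, which holds since $\nabla_{K_1}(z)=1+O(z^2)$ for any knot, hence is a unit in $\Z[[z]]$; and one must verify that rationality is genuinely transported across $x\mapsto -z^2$ and multiplication by $-z$, i.e.\ that $R(x)\in\Z[[x]]$ is rational iff $-zR(-z^2)$ is — this is immediate from the recurrence characterization, since interleaving a coefficient sequence with zeros and shifting by a fixed amount manifestly preserves satisfaction of a linear recurrence (with the companion polynomial replaced by a suitable polynomial in $z^2$). This is entirely routine, so there is no real obstacle: the substance of the corollary is entirely contained in Theorem \ref{formula}, which in turn rests on the Tsukamoto–Yasuhara factorization theorem and Cochran's identification of the $\eta$-function with $C_L$.
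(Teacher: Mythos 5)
Your proposal is correct and follows essentially the same route as the paper: the paper deduces the corollary directly from Theorem \ref{formula}(b), since $-zC_L(-z^2)=\nabla_\Lambda(z)/\nabla_{K_1}(z)$ is a quotient of integer polynomials with unit denominator in $\Z[[z]]$, and the remark at the start of \S\ref{rationality} transports rationality back across the change of variable to $C_L$ itself. Your observations that $\nabla_{K_1}(z)=1+O(z^2)$ and that $C_L$ is only defined when $\lk(L)=0$ are exactly the (routine) points left implicit in the paper.
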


\begin{remark} \label{conpol}
It should be mentioned that there is another good (perhaps, better) reason why $C_L$ is rational 
for PL links $L$.
The multi-variable Conway polynomial $\conpol_L(z_1,\dots,z_m)$ of an $m$-component PL link, or more generally 
of a link with a fixed partition $L=K_1\cup\dots\cup K_m$ into $m$ sublinks, is a polynomial with integer 
coefficients, unless $m=1$, in which case $\conpol_L(z)$ is the Laurent polynomial $z^{-1}\nabla_L(z)$ \cite{Me}.
It contains the same information as the Conway potential function $\Omega_L$ and satisfies Conway's first identity 
$\conpol_{L_+}-\conpol_{L_-}=z_i\conpol_{L_0}$ for self-intersections of $K_i$ (however, the relation between 
$\Omega_L$ and $\conpol_L$ is a lot more tricky than just a variable change) \cite{Me}.
The rational power series \[\conpol^*_L(z_1,\dots,z_m):=
\frac{\conpol_L(z_1,\dots,z_m)}{\nabla_{K_1}(z_1)\cdots\nabla_{K_m}(z_m)}\]
extends as a formal power series to wild links (``by continuity'') and the extension is an isotopy invariant 
(but of course need not be a rational power series) \cite{Me}.
A formula of J. T. Jin \cite{Jin1} relating the Kojima--Yamasaki $\eta$-function with the Conway potential 
function, when expressed in terms of Cochran's invariants and the multi-variable Conway polynomial, 
is as follows \cite{Me}: for a two-component PL link $L$ with $\lk(L)=0$,  
\[-C_L(-z^2)=z\frac{\partial\conpol_L^*(\zeta,z)}{\partial\zeta}\Big|_{\zeta=0}.\]
Since the right hand side is defined regardless of whether $\lk(L)=0$ and extends to wild links,
one can also define the left hand side for all $2$-component links by means of this formula.
The resulting extension $C_L^\conpol$ of $C_L$ coincides with $C_L^\mho$ when $\lk(L)=0$; in general
they are different (in particular, $C_L^\conpol$ is sensitive to changing the orientation of one of 
the components and $C_L^\mho$ is not) but contain the same information about the link, modulo the knowledge
of the linking number \cite{Me}.
\end{remark}

\begin{remark} For every pair $P$, $P'$ of rational power series J. T. Jin \cite{Jin2} constructed a PL link $L$ 
such that $C_L=P$ and $C'_L=P'$, where $C'_L(x)=\sum_{i=1}^\infty\beta'_i(L)x^i$ (see Remark \ref{properties}(e) 
concerning the $\beta'_i$).
It is also known that if $L=(K_0,K_1)$ is a PL link such that $K_0$ bounds a genus $g$ Seifert surface $\Sigma$ 
in $S^3\but K_1$ (that is, the closed surface $\Sigma\cup_\partial D^2$ is of genus $g$), then 
the $\beta_i(L)$ satisfy a linear recurrence relation of length $g+1$ \cite{GL}*{\S6}.
\end{remark}

\begin{example} \label{rational-example} Since there are only countably many rational power series 
(with integer coefficients), not all power series are rational. 
It is easy to construct explicit power series that are not rational.

(a) For instance, 
\begin{multline*}
(1-4x)^{-1/2}=\sum_{k=0}^\infty\frac{(-\frac12)(-\frac32)\cdots(-\frac12-k+1)}{k!}(-4x)^k=
\sum_{k=0}^\infty\frac{1\cdot 3\cdots(2k-1)}{k!}(2x)^k\\
=2\sum_{k=0}^\infty\frac{(2k-1)!}{(k-1)!k!}x^k
=2\sum_{k=0}^\infty\binom{2k-1}kx^k\in\Z[[x]].
\end{multline*}
If $(1-4x)^{-1/2}=P/Q$, where $P,Q\in\Z[x]$, then upon setting $x=1/8$ we get that $\sqrt 2$ is a rational 
number, which is a contradiction.

(b) Also, the power series $\sum_{k=0}^\infty k!x^k$ is not rational, since for a rational power series 
$P/Q=\sum_{i=0}^\infty n_ix^i$ the growth rate of the coefficients is bounded by $|n_k|\le Ck^dr^k$ 
for some $C,r>0$, where $d$ is the degree of $Q$ (see \cite{St}*{4.1.1(iii)}).
\end{example}

From Theorem \ref{main}, Corollary \ref{rational} and Examples \ref{w-example}, \ref{m-example} and 
\ref{rational-example}(b) we obtain

\begin{theorem}
The links $W_{1!,2!,3!,\dots}$ and $M_{1!,2!,3!,\dots}$ are not I-equivalent to any PL link.
\end{theorem}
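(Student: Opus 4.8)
The plan is to read off the Cochran power series of these two links, note that it is not a rational power series, and then combine the $I$-equivalence invariance of the $\beta_i$ with the rationality of $C_L$ for PL links to conclude.

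First I would record, using Example \ref{w-example} (resp.\ Example \ref{m-example}), that the link $L:=W_{1!,2!,3!,\dots}$ (resp.\ $M_{1!,2!,3!,\dots}$) satisfies $\beta_i(L)=i!$ for every $i\ge 1$; in particular $\lk(L)=0$, so that all the $\beta_i(L)$ are defined in the first place, and $C_L(x)=\sum_{i=1}^\infty i!\,x^i$. Next I would observe that this $C_L$ is not a rational power series. Indeed, by the growth-rate estimate quoted in Example \ref{rational-example}(b), the coefficients $n_k$ of a rational element of $\Z[[x]]$ satisfy $|n_k|\le Ck^dr^k$ for suitable constants, whereas $k!$ outgrows every such bound; this already rules out rationality of $\sum_{i\ge 1}i!\,x^i$ directly. (Equivalently, $C_L$ differs from the non-rational series $\sum_{k\ge 0}k!\,x^k$ of Example \ref{rational-example}(b) only by the constant $1$, and adding a constant does not affect rationality, since it only changes the numerator while keeping the same denominator of the form $1+xQ$.)

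Then I would argue by contradiction. Suppose $L$ were $I$-equivalent to some PL link $L'$. Since $I$-equivalence preserves the linking number, $\lk(L')=0$, so each $\beta_i(L')$ is defined. By Theorem \ref{main}(b) every $\beta_i$ is an invariant of $I$-equivalence, hence $\beta_i(L')=\beta_i(L)=i!$ for all $i$, i.e.\ $C_{L'}=C_L$. But $L'$ is a two-component PL link, so $C_{L'}$ is rational by Corollary \ref{rational}, contradicting the previous paragraph. The same argument applies verbatim to $M_{1!,2!,3!,\dots}$, using Example \ref{m-example} in place of Example \ref{w-example}.

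I do not expect a genuine obstacle here: once Theorem \ref{main} (invariance of the $\beta_i$ under $I$-equivalence, valid for wild links), Corollary \ref{rational} (rationality of $C_L$ for PL links), and the explicit computations $\beta_i(W_{n_1,n_2,\dots})=n_i$ and $\beta_i(M_{n_1,n_2,\dots})=n_i$ are available, the statement is a formal consequence. The only point that merits a sentence of care is the transfer of the non-rationality statement of Example \ref{rational-example}(b), as literally phrased for $\sum_{k\ge 0}k!\,x^k$, to the series $C_L=\sum_{i\ge 1}i!\,x^i$ — handled either by the constant-term remark above or, more directly, by applying the growth-rate bound to the coefficients $i!$ themselves.
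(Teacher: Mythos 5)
Your proposal is correct and follows exactly the paper's intended argument: the paper derives the theorem directly from Theorem \ref{main} ($I$-equivalence invariance of the $\beta_i$), Corollary \ref{rational} (rationality of $C_L$ for PL links), the computations of Examples \ref{w-example} and \ref{m-example} giving $\beta_i=i!$, and the non-rationality of $\sum k!\,x^k$ from Example \ref{rational-example}(b). Your extra remarks (preservation of the linking number under $I$-equivalence and the irrelevance of the constant term for rationality) are just the routine details the paper leaves implicit.
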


\section{Acknowledgements}

I would like to thank L. Beklemishev for the reference to Higman's lemma.

\end{document}